\documentclass[a4paper,11pt]{amsart}
\usepackage{mathptmx}
\usepackage{amsmath}
\usepackage{amscd}
\usepackage{amssymb, bm}
\usepackage{amsthm}
\usepackage{xspace}
\usepackage[all,tips]{xy}
\usepackage[dvips]{graphicx}
\usepackage{verbatim}
\usepackage{syntonly}
\usepackage{hyperref}
\usepackage{graphics, fullpage,color, epsfig,url}
\usepackage{indentfirst}
\usepackage{esint}
\usepackage{enumitem}
\usepackage[dvipsnames]{xcolor}
\usepackage{soul, amsaddr}
\usepackage{tensor}
\usepackage{amsrefs, eucal}
\usepackage{bbm}
\usepackage{ulem}

\providecommand{\MR}{\relax\ifhmode\unskip\space\fi MR }

\providecommand{\href}[2]{#2}

\theoremstyle{plain}
\newtheorem{thm}{Theorem}[section]
\newtheorem{lem}[thm]{Lemma}
\newtheorem{prop}[thm]{Proposition}
\newtheorem{defn}[thm]{Definition}
\newtheorem{cor}[thm]{Corollary}

\newtheorem*{ex}{Example}

\theoremstyle{remark}
\newtheorem{rem}[thm]{Remark}
\newtheorem*{rems}{Remark}

\newcommand{\disp}{\displaystyle}

\DeclareMathOperator{\supp}{supp}
\DeclareMathOperator{\di}{div}

\DeclareMathOperator{\loc}{loc}

\newcommand{\eps}{\varepsilon}
\newcommand{\vp}{\varphi}

\newcommand{\al}{\alpha}
\newcommand{\be}{\beta}
\newcommand{\ga}{\gamma}
\newcommand{\de}{\delta}

\newcommand{\la}{\lambda}
\newcommand{\La}{\Lambda}

\newcommand{\iny}{\infty}

\newcommand{\su}{\subset}
\newcommand{\LP}{\Delta}
\newcommand{\gr}{\nabla}

\newcommand{\inrn}{\ensuremath{\int_{\R^n}}}

\newcommand{\norm}[1]{\left\| #1\right\|}

\newcommand{\abs}[1]{\left\vert#1\right\vert}

\newcommand{\set}[1]{\left\{#1\right\}}
\newcommand{\brac}[1]{\left[#1\right]}

\newcommand{\pr}[1]{\left( #1 \right) }
\newcommand{\pb}[1]{\left( #1 \right] }

\newcommand{\N}{\ensuremath{\mathbb{N}}}

\newcommand{\R}{\ensuremath{\mathbb{R}}}

\newcommand{\C}{\ensuremath{\mathbb{C}}}

\newcommand{\Rn}{\ensuremath{\R^n}}

\def\XXint#1#2#3{{\setbox0=\hbox{$#1{#2#3}{\int}$}
\vcenter{\hbox{$#2#3$}}\kern-.5\wd0}}

\numberwithin{equation}{section}
\date{}

\begin{document}

\title{Unique continuation at infinity for Schr\"odinger equations \\ with Reverse H\"older Potentials}
\author[Davey]{Blair Davey}
\address{Department of Mathematical Sciences, Montana State University, Bozeman, MT, 59717}
\email{blairdavey@montana.edu}
\thanks{Davey is supported in part by National Science Foundation CAREER DMS - 2236491}
\subjclass[2010]{35B60, 35J10}
\keywords{Landis conjecture, unique continuation, Schr\"odinger equation, reverse H\"older, Agmon distance}

\begin{abstract}
In this article, we study unique continuation properties at infinity for solutions to generalized Schr\"odinger equations with potential functions that belong to the reverse H\"older class.
For equations of the form $-\di \pr{A \gr u} + V u = 0$ in $\R^n$, where $A$ is bounded and elliptic and $V \in RH_p$ for some $p \in [\frac n 2, \iny]$, we prove that if a solution doesn't grow too quickly at infinity, then it must be trivial.
We use $d_V$, the Agmon distance function associated to $V$, to quantify the threshold growth rate.
More precisely, there exists a constant $\ga_0 > 0$ so that if $|u(x)| \lesssim \exp\pr{\ga \, d_V(x, 0)}$ for some $\ga < \ga_0$ and every $x \in \R^n$, then $u$ must be trivial.
The result may be interpreted as a Liouville-type theorem and is related to Landis' conjecture.
Our proof techniques are inspired by Z. Shen's exponential decay estimates for fundamental solutions of Schr\"odinger operators and involve the application of a Fefferman-Phong inequality.
\end{abstract}

\maketitle

\section{Introduction}

In the late 1960s, E.~M.~Landis \cite{KL88} made the following conjecture regarding the global unique continuation properties of solutions to Schr\"odinger equations: 
If $u$ is a bounded solution to 
\begin{equation}
\label{ePDE0}
-\LP u + V u = 0 \; \text{ in } \, \R^n,
\end{equation}
where $V$ is a bounded function and $u$ satisfies $\abs{u(x)} \lesssim \exp\pr{- c \abs{x}^{1+}}$, then $u \equiv 0$.
This conjecture was later disproved by Meshkov \cite{Mes92} who constructed non-trivial $\C$-valued functions $u$ and $V$ that satisfy $-\LP u + V u = 0$ in $\R^2$, where $V$ is bounded and $\abs{u(x)} \lesssim \exp\pr{- c \abs{x}^{4/3}}$. 
Meshkov also proved a \textit{qualitative unique continuation} result: 
If $u$ solves \eqref{ePDE0}, where $V$ is bounded and $u$ satisfies a decay estimate of the form $\abs{u\pr{x}} \lesssim \exp\pr{- c \abs{x}^{4/3+}}$, then necessarily $u \equiv 0$.
In their work on Anderson localization \cite{BK05}, Bourgain and Kenig established a quantitative version of Meshkov's result. 
As a first step in their proof, they used three-ball inequalities derived from Carleman estimates to establish \textit{order of vanishing} estimates (quantifications of the strong unique continuation property) for local solutions to Schr\"odinger equations.
Then, through a scaling argument, they proved a \textit{quantitative unique continuation} result.
More specifically, they proved that if $u$ and $V$ are bounded, and $u$ is normalized so that $\abs{u(0)} \ge 1$, then for sufficiently large values of $R$,
\begin{equation}
\label{est}
 \inf_{|x_0| = R}\norm{u}_{L^\iny\pr{B_1(x_0)}} \ge \exp{(-CR^{\be}\log^\mu R)},
\end{equation} 
where $\be = \frac 4 3$ and $\mu = 1$.
The bound in \eqref{est} may be described as an estimate for the \textit{rate of decay at infinity}.
Since $ \frac 4 3 > 1$, the constructions of Meshkov, in combination with the qualitative and quantitative unique continuation theorems just described, indicate that Landis' conjecture cannot be true in the complex-valued setting.

In the real-valued setting, the remarkable work of Logunov, Malinnikova, Nadirashvili, and Nazarov \cite{LMNN25} established a quantitative version of Landis' conjecture in the planar setting, $\R^2$.
That is, if $u$ and $V$ are bounded and real-valued, and $u$ is appropriately normalized, then for sufficiently large values of $R$, \eqref{est} holds with $\be = 1$ and $\mu = \frac 3 2$.
Very recently, Frank and Ivanisvili \cite{FI26} constructed real-valued functions $u , V: \R^n \to \R$ that solve \eqref{ePDE0} with $V$ bounded and $\abs{u(x)} \lesssim \exp\pr{- c \abs{x}^{4/3}}$ for each $x \in \R^n$.
As their examples work for all $n \ge 3$, the real-valued version of Landis' conjecture is now completely understood.

In this article, we study the qualitative unique continuation properties of real-valued solutions to variable-coefficient Schr\"odinger equations with reverse H\"older potentials.
Accordingly, the questions that we address here are related to Landis' conjecture.
However, given that reverse H\"older functions are signed and unbounded, our setting is distinct from the classical one.
Moreover, in contrast to the work of \cite{BK05} and \cite{LMNN25}, for example, we don't formulate our estimates in terms of the Euclidean distance to the origin.
Here we take a different perspective and derive results in terms of the \textit{Agmon distance}.
As articulated in Definition \ref{AgmonDistance}, the Agmon distance, $d_V$, is naturally associated to a reverse H\"older function $V \in RH_p$ whenever $p \ge \frac n 2$. 

In our main result, we show that if a solution doesn't have enough growth at infinity, then it must be trivial.
In some sense, this theorem is similar to Meshkov's qualitative unique continuation result \cite{Mes92} described above which states that if a solution decays too quickly, then it must be trivial.
Given that we consider $V \in RH_p$, the maximum principle is applicable, so it is overly restrictive to consider solutions that decay at infinity (or are even bounded).
In other words, Meshkov's result is immediate for solutions to equations with non-negative potentials.
Therefore, in our setting, it is quite natural to examine the ``optimal growth at infinity".
We use the Agmon distance function to quantify the threshold growth function.

We formulate the main result in a few ways.
Our first theorem states that if there exists a collection of concentric annuli with bounded, increasing radii over which some exponentially weighted integrals of the solution are diminishing, then the solution must be trivial.
We choose to work with annuli since they are most naturally suited to the proof techniques.
In the second version of the main result, we show that if the solution is pointwise bounded by an exponentially growing function, then it must be trivial.
We include this formulation because it is very similar to a Liouville-type statement.
The last version of the main result shows if $u$ is a non-trivial solution, then there exists a sequence of points over which the norm of $u$ is exponentially increasing with respect to the Agmon distance.
Although this version of the result is a technical consequence of a first (qualitative) statement, this particular formulation was inspired by the quantitative unique continuation results described by \eqref{est}.

Now we make the setting precise.
We consider weak solutions to variable-coefficient Schr\"odinger equations of the form
\begin{equation}
\label{ePDE}
- \di \pr{A \gr u} + V u = 0 \; \text{ in } \, \R^n,
\end{equation}
where the coefficient matrix $A : \R^n \to \mathbb{M}_{n}$ is bounded and elliptic and the potential function $V : \R^n \to \R$ belongs to the reverse H\"older class, $RH_p$, for some $p \in \brac{\frac n 2, \iny}$.
By \textit{bounded} and \textit{elliptic}, we mean that for some $0 < \la \le 1 \le \La < \iny$, it holds that
\begin{align}
&\abs{A(x) \xi \cdot \zeta} \le \La \abs{\xi} \abs{\zeta} \, \text{ for a.e. } x \in \R^n, \xi, \zeta \in \R^n .
\label{Abound} \\
&A(x) \xi \cdot \xi \ge \la \abs{\xi}^2 \, \text{ for a.e. } x \in \R^n, \xi \in \R^n .
\label{ellip} 
\end{align}
We say that $V$ is a \textit{reverse H\"older function}, $V \in RH_p$, if $V \in L^p_{\text{loc}}(\R^n)$, $V > 0$ a.e., and there exists $C_V > 0$ so that for every ball $B \su \R^n$,
$$\frac 1 {C_V} \pr{\fint_B V^p}^{\frac 1 p} \le \fint_B V := \frac 1 {\abs{B}} \int_B V.$$
A function $u \in H^1_{\loc}(\R^n)$ is a \textit{weak solution} to \eqref{ePDE} if for every $\vp \in H^1_0(\R^n)$ with compact support, it holds that
\begin{equation}
\label{weakSol}
\int_{\R^n} A \gr u \cdot \gr \vp + V u \vp = 0.
\end{equation}
Since $V \in RH_p$ implies that $V$ is real-valued, there is no loss of generality in assuming that the solution function $u$ is also real-valued.

We use $B_r(x_0)$ to denote a ball of radius $r > 0$ centered at a point $x_0 \in \R^n$.
If the center is understood from context, or we take $x_0 = 0$, then we may write $B_r$ for short.
We let $A(x_0, r_1, r_2) = B_{r_2}(x_0) \setminus \overline{B_{r_1}(x_0)}$ denote the open annulus centered at $x_0$ with inner radius $r_1 > 0$ and outer radius $r_2 > r_1$.
When the center is understood, we may abbreviate this to $A(r_1, r_2)$.

The main theorem of this article is as follows.
As mentioned above, this statement may be interpreted as a qualitative unique continuation result.

\begin{thm}[Main result]
\label{MainThm}
Let $A : \R^n \to \mathbb{M}_{n}$ be bounded and elliptic in the sense of \eqref{Abound} and \eqref{ellip}.
For $p \in \brac{\frac n 2, \iny}$, let $V \in RH_p$ and let $d_V : \R^n \times \R^n \to \R_{\ge 0}$ denote the Agmon distance function.
There exists a constant $\ga_0(n, \la, \La, p, C_V) > 0$ so that the following holds:

If $u : \R^n \to \R$ a weak solution to \eqref{ePDE} and there is an increasing, unbounded sequence $\set{R_j}_{j = 1}^\iny \su \R_+$ for which
\begin{equation}
\label{uIntBound}
\lim_{j \to \iny} \int_{A\pr{R_j, R_j + 1}} \abs{u(x)}^2 e^{-2 \ga d_V(x, 0)} dx = 0
\end{equation}
for some $\ga < \ga_0$, then $u \equiv 0$.
\end{thm}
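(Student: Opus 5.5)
We will prove the theorem with a weighted Caccioppoli (energy) estimate, using an exponential weight built from a regularized Agmon distance, in the spirit of Shen's exponential decay arguments. The key tool is the Fefferman--Phong inequality for $V \in RH_p$: with the critical radius function $m(x,V)$ (so that $d_V(x,y)$ is comparable to the distance in the metric $m(\cdot,V)\,dx$),
\[
\int_{\R^n} m(x,V)^2 \abs{\vp}^2 \le C \int_{\R^n} \pr{\abs{\gr \vp}^2 + V \abs{\vp}^2}
\]
for all compactly supported $\vp \in H^1$.

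\textbf{Step 1: a smooth weight.} Replace $d_V(\cdot,0)$ by a regularized function $\phi$ with three properties:
\begin{itemize}[label={}]
\item $\abs{\phi - d_V(\cdot,0)} \le C$;
\item $\abs{\gr \phi(x)} \le C_1\, m(x,V)$;
\item $\phi$ is bounded on bounded sets.
\end{itemize}
Such a $\phi$ comes from the standard Shen-type construction: a truncated/mollified distance at scale $1/m(x,V)$, using the slow variation of $m$. By the first property, the hypothesis \eqref{uIntBound} holds with $\phi$ in place of $d_V(\cdot,0)$.

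\textbf{Step 2: the weighted energy identity.} Fix $R_0$, and for large $j$ take a cutoff $\eta_j$ with $\eta_j = 1$ on $B_{R_j}$, $\eta_j = 0$ outside $B_{R_j+1}$, and $\abs{\gr\eta_j}\le 2$. Set $w = e^{-\ga \phi}$ and test \eqref{weakSol} with $\vp = u\, w^2 \eta_j^2$. With $v = u w \eta_j$, the identity expands to
\[
\int A\gr v\cdot \gr v + V v^2 = \int A\pr{\gr(w\eta_j)}\cdot \gr(w\eta_j)\, u^2 + \text{cross terms}.
\]
More cleanly, this is the standard identity
\[
\int A\gr v\cdot\gr v + Vv^2 = \int u^2\, A\gr(w\eta_j)\cdot\gr(w\eta_j),
\]
which holds when $A$ is symmetric. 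For nonsymmetric $A$ there are additional terms, which I would absorb with Cauchy--Schwarz. The right side splits into two pieces:
\begin{itemize}[label={}]
\item a piece bounded by $C\ga^2 \int m^2 \phi'^2$-type terms, i.e. $\le C \La C_1^2 \ga^2 \int m(x,V)^2 v^2$;
\item a piece supported in $A(R_j,R_j+1)$, bounded by $C\int_{A(R_j,R_j+1)} u^2 e^{-2\ga\phi}$.
\end{itemize}

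\textbf{Step 3: absorption via Fefferman--Phong.} Apply Fefferman--Phong to $v$, using ellipticity:
\[
\int m^2 v^2 \le C_{FP}\,\la^{-1}\pr{\int A\gr v\cdot\gr v + V v^2}.
\]
Choose $\ga_0$ so that $C\La C_1^2\ga_0^2\, C_{FP}\la^{-1} \le 1/2$. Then, for $\ga<\ga_0$, the $m^2 v^2$ term is absorbed into the left side, giving
\[
\int_{B_{R_j}} \pr{\abs{\gr u}^2 + V u^2}\, e^{-2\ga\phi} \lesssim \int_{A(R_j,R_j+1)} u^2 e^{-2\ga\phi} \to 0 .
\]
By monotone convergence in $j$, $\int_{\R^n} (\abs{\gr u}^2 + V u^2) e^{-2\ga\phi} = 0$. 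Since $V>0$ a.e., we get $u=0$ a.e., i.e. $u\equiv0$.

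\textbf{Main obstacle.} The delicate point is the cross terms, where $\gr w$ meets $\gr \eta_j$ or $\gr v$. They must be controlled either by the absorbable quantity $\ga^2\int m^2 v^2$ or by the annulus integral. A term like $\ga\int \abs{\gr\phi}\abs{\gr\eta_j} u^2 w^2$ needs $m(x,V)$ to be bounded on $A(R_j,R_j+1)$ relative to $1$. This is not automatic, since $m(x,V)$ can grow like $\abs{x}^{k_0}$. Two fixes are available:
\begin{itemize}[label={}]
\item split with Young's inequality, putting $\ga^2 m^2 u^2 w^2\eta_j^2$ into the absorbable term and $\abs{\gr\eta_j}^2u^2w^2$ into the annulus term;
\item if necessary, use cutoffs adapted to scale $1/m$.
\end{itemize}
I would first try the Young splitting, which should suffice. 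I would also verify that the Fefferman--Phong constant and $C_1$ depend only on $(n,p,C_V)$, which ensures that $\ga_0$ depends only on the stated parameters.
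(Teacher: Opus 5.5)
Your proposal is correct and is essentially the paper's argument. It uses the same ingredients: Shen's regularized Agmon distance with $\abs{\gr g}\lesssim m_V$, the test function $u\,e^{-2\ga g}\eta_j^2$, Young's inequality to split $\abs{\gr(e^{-\ga g}\eta_j)}^2$ into an absorbable $\ga^2 m_V^2$ piece and an annulus piece (and to handle the $(A^T-A)$ cross term), and Fefferman--Phong to absorb, exactly as in Lemma \ref{MainUpperBoundLem} applied with annular cutoffs. The only difference is the ending: you conclude from $V>0$ a.e.\ and monotonicity in $j$, whereas the paper keeps $\abs{m_V u}^2$ on the left and uses explicit lower bounds for $m_V$ and upper bounds for $g_V(\cdot,0)$ on a fixed ball $B_{R_0}$ to reach the contradiction.
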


Instead of formulating the growth condition with integrals, we may instead impose a pointwise bound on the solution to obtain a statement that resembles Liouville's theorem.

\begin{cor}[Liouville-type result]
\label{LiouvilleCor}
Let $A : \R^n \to \mathbb{M}_{n}$ be bounded and elliptic in the sense of \eqref{Abound} and \eqref{ellip}.
For $p \in \brac{\frac n 2, \iny}$, let $V \in RH_p$ and let $d_V : \R^n \times \R^n \to \R_{\ge 0}$ denote the Agmon distance function.
There exists a constant $\ga_0(n, \la, \La, p, C_V) > 0$ so that the following holds:

If $u : \R^n \to \R$ is a weak solution to \eqref{ePDE} and $u$ has bounded growth in the sense that for some $\ga_1 < \ga_0$, there exists $C_{\ga_1} > 0$ so that for each $x \in \R^n$,
\begin{equation}
\label{uPwBound}
\abs{u(x)} \le C_{\ga_1} e^{\ga_1 d_V(x, 0)},
\end{equation}
then $u \equiv 0$.
\end{cor}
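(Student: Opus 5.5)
The plan is to deduce Corollary \ref{LiouvilleCor} directly from Theorem \ref{MainThm}, taking the same constant $\ga_0$. Given $\ga_1 < \ga_0$, choose $\ga$ with $\ga_1 < \ga < \ga_0$, for instance $\ga = \frac{\ga_1 + \ga_0}{2}$, and take $R_j = j$. I will check that \eqref{uIntBound} holds with this $\ga$. By \eqref{uPwBound}, for $x \in A(j, j+1)$,
\begin{equation*}
\abs{u(x)}^2 e^{-2\ga d_V(x,0)} \le C_{\ga_1}^2 e^{-2(\ga - \ga_1) d_V(x,0)} .
\end{equation*}
Integrating over the annulus then gives
\begin{equation*}
\int_{A(j,j+1)} \abs{u}^2 e^{-2\ga d_V(x,0)}\,dx \le C_{\ga_1}^2 \abs{A(j,j+1)} \sup_{x \in A(j,j+1)} e^{-2(\ga-\ga_1) d_V(x,0)} .
\end{equation*}
Note that $\abs{A(j,j+1)} \approx j^{n-1}$.

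The key input is therefore a lower bound on the growth of the Agmon distance. For $V \in RH_p$ with $p \ge \frac n2$, Shen's theory gives $d_V(x,0) \gtrsim \pr{1 + \abs{x}\, m(0,V)}^{k_0/(k_0+1)} - C$, where $m(\cdot, V)$ is the critical radius function and $k_0$ depends on $n, p, C_V$. I expect the paper to record this (or a comparable bound) alongside Definition \ref{AgmonDistance}. Since $m(0,V) > 0$ because $V > 0$ a.e., this yields $d_V(x,0) \ge c\abs{x}^{\de} - C$ for some $\de > 0$ and all $x$. The weighted supremum is then at most $e^{-c' j^{\de}}$, which beats the polynomial factor $j^{n-1}$. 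Hence the integrals tend to $0$, Theorem \ref{MainThm} applies, and $u \equiv 0$.

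The main obstacle is establishing this polynomial lower bound for $d_V(x,0)$ when it has not already been stated. Mere divergence $d_V(x,0)\to\iny$ is not quite enough, because of the volume factor $j^{n-1}$. If only weak information were available, the fallback would be to pick a sparse sequence $R_j$ along which the infimum of $d_V$ over $A(R_j, R_j+1)$ exceeds $\frac{n}{\ga-\ga_1}\log R_j$. This still requires quantitative growth of $d_V$, which in turn follows from the standard estimate $m(x,V)\gtrsim m(0,V)\pr{1+\abs{x}m(0,V)}^{-k_0/(k_0+1)}$ integrated along paths.

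A second subtlety is that \eqref{uPwBound} presupposes $u$ has a pointwise-defined representative. If it is read as an almost-everywhere bound, which suffices for the integral, no issue arises.
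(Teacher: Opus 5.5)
Your proposal is correct and follows the paper's proof essentially verbatim: take $R_j = j$ and $\ga = \frac12(\ga_1+\ga_0)$, bound the integrand by $C_{\ga_1}^2 e^{-(\ga_0-\ga_1)d_V(x,0)}$, and use a polynomial lower bound on $d_V(\cdot,0)$ to beat the volume factor $j^{n-1}$. The paper records that lower bound as Lemma \ref{ulDistance}(c): $d_V(x,0) \ge c_5 (\abs{x} m_V(0))^{1/(k_0+1)}$ for $\abs{x} \ge m_V(0)^{-1}$, with exponent $1/(k_0+1)$ rather than your $k_0/(k_0+1)$, which is what integrating Lemma \ref{mBounds}(c) along paths actually gives; this is harmless since any positive exponent suffices, and it makes your sparse-sequence fallback unnecessary.
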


Corollary \ref{LiouvilleCor} follows from Theorem \ref{MainThm} since \eqref{uPwBound} implies \eqref{uIntBound} with any $\ga \in (\ga_1, \ga_0)$ and $R_j = j$, for example.
The details of this argument appear in Section \ref{MainResultProofs} after the proof of Theorem \ref{MainThm}.

Finally, we state a consequence of the contrapositive to Theorem \ref{MainThm}.
This version of the result was inspired by the quantitative unique continuation results described in \eqref{est}, for example.

\begin{cor}[Growth at infinity]
\label{OGICor}
Let $A : \R^n \to \mathbb{M}_{n}$ be bounded and elliptic in the sense of \eqref{Abound} and \eqref{ellip}.
For $p \in \brac{\frac n 2, \iny}$, let $V \in RH_p$ and let $d_V : \R^n \times \R^n \to \R_{\ge 0}$ denote the Agmon distance function.
There exists a constant $\ga_0(n, \la, \La, p, C_V) > 0$ so that the following holds:

If $u : \R^n \to \R$ is a non-trivial weak solution to \eqref{ePDE}, then for every $\ga_1 < \ga_0$, there exists an increasing, unbounded sequence of points $\set{x_j}_{j = 1}^\iny \su \R^n$ with 
\begin{equation}
\label{uballBound}
\norm{u}_{L^2\pr{B_1(x_j)}}  \ge j e^{2 \ga_1 d_V(x_j, 0)} 
\end{equation}
for every $j \in \N$.
\end{cor}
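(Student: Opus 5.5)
We argue by contradiction, taking $\ga_0$ to be the constant from Theorem \ref{MainThm}. Suppose $u$ is a non-trivial weak solution and fix $\ga_1 < \ga_0$. Choose $\ga$ with $2\ga_1 < \ga$, which is possible only when $2\ga_1 < \ga_0$. To keep the full range $\ga_1 < \ga_0$, we instead work with the constant $\ga_0' := \ga_0/2$ in place of $\ga_0$ here; this is harmless because the corollary only asserts the existence of some constant $\ga_0(n,\la,\La,p,C_V)$. So we may assume $2\ga_1 < \ga < \ga_0$ for the $\ga_0$ of Theorem \ref{MainThm}. Theorem \ref{MainThm}, applied with $R_j = j$, shows that the integral in \eqref{uIntBound} does not tend to zero. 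Hence there are $\eps > 0$ and an infinite set $J \su \N$ with
$$\int_{A(k,k+1)} \abs{u}^2 e^{-2\ga d_V(x,0)}\,dx \ge \eps \quad \text{for all } k \in J.$$

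Next we localize to unit balls. Cover each annulus $A(k,k+1)$ by at most $N(n,k)$ balls $B_1(y)$ with centers in the annulus, where $N \lesssim k^{n-1}$. For each $k\in J$, pigeonholing gives a ball $B_1(y_k)$ with
$$\int_{B_1(y_k)} \abs{u}^2 e^{-2\ga d_V(x,0)} \ge c\,\eps\, k^{1-n}.$$
To pass to the weight evaluated at the center, we need a comparison $d_V(x,0) \ge d_V(y_k,0) - C$ for $x \in B_1(y_k)$. The triangle inequality gives $d_V(x,0) \ge d_V(y_k,0) - d_V(x,y_k)$, and $d_V(x,y_k)$ is controlled by the local Agmon estimates for $RH_p$ weights, namely $d_V(x,y) \lesssim (1+\abs{x-y}m(y))^{k_0}$ with $m$ the Shen critical function. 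This bound grows with $m(y_k)$ and is not uniformly bounded, so this is the main obstacle.

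To get around it, we keep only lower bounds. We use $d_V(x,0) \ge d_V(y_k,0) - d_V(x,y_k)$ together with the polynomial growth $m(y) \lesssim (1+\abs y)^{k_0}m(0)$. This gives $d_V(x,y_k) \le C(1+k)^{K}$ on $B_1(y_k)$, so
$$\norm{u}_{L^2(B_1(y_k))}^2 \ge c\,\eps\,k^{1-n} e^{2\ga d_V(y_k,0) - C k^{K}}.$$
This polynomial loss must be beaten by the exponential gap between $\ga$ and $2\ga_1$. We have $d_V(y_k,0) \gtrsim$ a positive power of $k$, since $d_V(y,0) \gtrsim (1+\abs y m(0))^{1/(k_0+1)}$ from Shen's lower bounds. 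This only helps if that power exceeds $K$, which may fail. If it fails, the fallback is to redo the last step of the proof of Theorem \ref{MainThm} with balls of radius $1/m(y)$, i.e.\ use an Agmon-adapted cover, on which $d_V$ varies by $O(1)$.

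With that cover we get $d_V(x,0) \ge d_V(y_k,0) - C$ on each ball. Pigeonholing then gives
$$\norm{u}_{L^2(B_1(y_k))} \ge c_k\, e^{\ga d_V(y_k,0)},$$
where $c_k$ decays at most polynomially in $\abs{y_k}$. Since $\ga - 2\ga_1 > 0$ and $d_V(y_k,0)\to\iny$ at least like a power of $\abs{y_k}$, the quantity $e^{(\ga - 2\ga_1) d_V(y_k,0)}$ eventually dominates $j/c_k$.

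Finally, pass to a subsequence $x_j := y_{k_j}$ with $\abs{x_j}$ increasing and unbounded, chosen so that $c_{k_j} e^{(\ga - 2\ga_1)d_V(x_j,0)} \ge j$. Then
$$\norm{u}_{L^2(B_1(x_j))} \ge j\, e^{2\ga_1 d_V(x_j,0)},$$
which is \eqref{uballBound}.
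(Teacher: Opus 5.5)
Your final argument (the Agmon-adapted cover) is essentially the paper's proof: the contrapositive of Theorem~\ref{MainThm}, a Vitali cover of the annulus by balls of radius $r(x)=\min\set{1,m_V(x)^{-1}}$ whose number grows only polynomially by Lemma~\ref{mBounds}(b), the $O(1)$ oscillation of $d_V$ on each ball from Lemma~\ref{ulDistance}(a), and the exponential gap beating the polynomial loss via Lemma~\ref{ulDistance}(c). You should cap the radius at $1$ as the paper does, since $m_V$ may tend to $0$ at infinity and then $B_{1/m_V(y)}(y)\not\subset B_1(y)$. The one real difference is the exponent bookkeeping. The paper takes $\ga=\frac{\ga_0+\ga_1}{2}$ and actually proves $\int_{B_1(x_j)}\abs{u}^2\ge j\,e^{2\ga_1 d_V(x_j,0)}$, i.e.\ the squared norm, for all $\ga_1<\ga_0$. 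You instead read the printed $\norm{u}_{L^2(B_1(x_j))}$ literally and correctly compensate by halving $\ga_0$ and taking $\ga\in(2\ga_1,\ga_0)$, which gives an equivalent statement.
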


Corollary \ref{OGICor} follows from the contrapositive to Theorem \ref{MainThm} in combination with a delicate treatment of the Agmon distance function.
These details appear at the end of Section \ref{MainResultProofs}.

Given that $V(x) \equiv V_0 \in \R_+$ belongs to $RH_p$ for every $p > 1$, simplified versions of these results hold for solutions to $-\LP u + V_0 u = 0$ in $\R^n$.
We discuss these simpler statements and their proofs in Section \ref{exampleSection}.

Schr\"odinger operators with reverse H\"older potentials have been studied in numerous contexts.
Shen proved solvability of the Neumann problem for $-\LP + V$ with $V \in RH_\iny$ \cite{She94}, various $L^p$ estimates for $-\LP + V$ with $V \in RH_p$ \cite{She95}, eigenvalue asymptotics and exponential decay for eigenfunctions of magnetic Schr\"odinger operators with reverse H\"older potentials \cite{She96}, and two-sided exponential decay estimates for fundamental solutions of generalized Schr\"odinger operators \cite{She99}.
Accordingly, much of the machinery that was developed by Shen in \cite{She94, She95, She96, She99} is used in this article.
Shen's work in \cite{She95} was extended by Auscher and Ben Ali in \cite{AB07} to include a larger range of $p$-values, and later generalized to magnetic operators by Ben-Ali in \cite{Ben10}.
Mayboroda and Poggi proved upper and lower exponential decay estimates for fundamental solutions of very general magnetic Schr\"odinger operators in \cite{MP19}.
In the systems setting, fundamental matrices associated to generalized Schr\"odinger operators with reverse H\"older potentials were studied by the author with Mayboroda and Hill in \cite{DHM18} and with Isralowitz in \cite{DI24}.
The article \cite{DI24} includes two-sided exponential decay estimates for fundamental matrices.

The author is not aware of other articles that explore the unique continuation properties of time-independent Schr\"odinger equations with bounded, elliptic coefficients and reverse H\"older potentials.
However, there are some unique continuation results that are formulated in terms of the Agmon distance.
In \cite{Ueb26}, Uebersch{\"a}r studies unique continuation at infinity for solutions to Schr\"odinger equations with radial potential functions and shows that under suitable smoothness assumptions, the decay threshold can be formulated in terms of the Agmon distance.
We also point out that Uebersch{\"a}r's Liouville-type result in \cite{Ueb25} (which apply to solutions to Schr\"odinger equation with continuous non-negative potentials) are similar in flavor to Theorem \ref{MainThm}.

Most unique continuation results (at least in dimension $n \ge 3$) require some continuity assumptions on the coefficient matrix $A$.
In the local setting, the examples of Plis \cite{Pli63}, Miller \cite{Mil74}, Mandache \cite{Man98}, and Filonov \cite{Fil01} show that unique continuation can fail for elliptic equations with H\"older continuous coefficients.
The globally-defined solutions constructed in \cite{FLP25} and \cite{DJ26} decay super-exponentially and therefore demonstrate the possible failure of unique continuation at infinity for elliptic equations with Lipschitz continuous coefficients that don't decay fast enough.
Thus, it is somewhat surprising that Theorem \ref{MainThm} holds under the minimal assumptions that $A$ is merely bounded and elliptic.
One interpretation of our result is that the reverse H\"older potential has some kind of smoothing effect. 

The strong unique continuation property (SUCP) holds for $-\LP + V$ whenever $V \in L^p_{\loc}$ and $p > \frac n 2$, \cite{JK85}. 
Since $RH_p \su L^p_{\loc}$, the SUCP automatically holds in our setting if we assume that $A$ is also symmetric and Lipschitz continuous.
Because of its scale-invariance properties, the reverse H\"older class may be more suited for global problems.
However, it could be interesting to investigate the local unique continuation properties for the operators that we consider here.

Most rate of decay estimates (Landis-type theorems) and their related order of vanishing estimates (quantifications of the SUCP) are proved using Carleman estimates \cite{Mes92}, \cite{BK05}, \cite{CS99}, \cite{Dav14} or frequency functions \cite{Kuk98}, \cite{Zhu16}, \cite{Dav27}.
In the planar setting, complex analytic techniques have lead to breakthrough results \cite{KSW15}, \cite{LMNN25}, but these tools are not applicable in the current setting.

The techniques employed in this article are inspired by the ideas in \cite{She99} (see also \cite{MP19} and \cite{DI24}) that were used to prove exponential upper and lower bounds of fundamental solutions.
More specifically, our main tool, Lemma \ref{MainUpperBoundLem}, is very similar to \cite[Lemma 3.1]{She99}, \cite[Proposition 4.3, Proposition 6.5]{MP19}, and \cite[Lemma 20]{DI24}.
To prove Lemma \ref{MainUpperBoundLem}, we rely on the framework developed by Shen in \cite{She99}, for example.
This framework includes the ideas that are used to develop the notion of the Agmon distance, see Definition \ref{AgmonDistance}, as well as the Fefferman-Phong inequality, Proposition \ref{FPml}, and Agmon's argument from \cite{Agm82}.
The Fefferman-Phong inequality \cite{Fef83} was originally developed in the study of the uncertainty principle, but our version resembles that in \cite[Lemma 1.11]{She94}.
By combining Proposition \ref{FPml} with integral identities that hold for weak solutions to elliptic equations, we arrive at Lemma \ref{MainUpperBoundLem}.
With an appropriate choice of cutoff function, we derive a sequence of inequalities that imply Theorem \ref{MainThm}.
In some sense, Lemma \ref{MainUpperBoundLem} plays the role of a Carleman estimate in our arguments.

To demonstrate the central ideas in the proof of Theorem \ref{MainThm}, a simplified version of the main theorem is presented in Theorem \ref{MainThm0}.
In addition, a simpler version of the main lemma appears in Lemma \ref{MainUpperBoundLem0}.
The method that we use to go from Lemma \ref{MainUpperBoundLem0} to Theorem \ref{MainThm0} is a streamlined version of that which takes us from Lemma \ref{MainUpperBoundLem} to Theorem \ref{MainThm}, but it still captures the significant ideas.

This article is organized as follows.
In Section \ref{exampleSection}, we state and prove a version of the main theorem and its first corollary for the equation $-\LP u + V_0 u = 0$ in $\R^n$, where $V_0 > 0$ is constant.
The intention is that this section demonstrates the big ideas behind the proof of Theorem \ref{MainThm}.
Section \ref{RHetc} introduces the reverse H\"older class and develops the theory that leads to the definition of the Agmon distance.
The content of this section is mostly reproduced from the works of Shen in \cite{She94}, \cite{She95}, \cite{She96}, and \cite{She99}.
In Section \ref{FPI}, we state a known Poincar\'e inequality, we state and prove a (slight modification of a) known Fefferman-Phong inequality, and then we use these results to give our main lemma, Lemma \ref{MainUpperBoundLem}.
As mentioned above, Lemma \ref{MainUpperBoundLem} is inspired by \cite[Lemma 3.1]{She99}, \cite[Proposition 4.3, Proposition 6.5]{MP19}, and \cite[Lemma 20]{DI24}.
The proofs of Theorem \ref{MainThm} and its corollaries appear in the final section, Section \ref{MainResultProofs}.

\section{Illustrative results}
\label{exampleSection}

To demonstrate the main ideas in the proof of Theorem \ref{MainThm}, we consider weak solutions to equations of the form $- \LP u + V_0 u = 0$ where $V_0$ is a positive constant.
A function $u \in H^1_{\loc}(\R^n)$ is a weak solution to $- \LP u + V_0 u = 0$ in $\R^n$ if for every $\vp \in H^1_0(\R^n)$ with compact support, it holds that
\begin{equation}
\label{weakEigen}
\int_{\R^n} \gr u \cdot \gr \vp + V_0 u \vp = 0.
\end{equation}
Although weak solutions to $- \LP u + V_0 u = 0$ are also classical solutions, we use the weak framework as preparation for future sections.

The goal of this section is to prove the following statement.

\begin{thm}[Example main result]
\label{MainThm0}
If $u : \R^n \to \R$ is a weak solution to  $- \LP u + V_0 u = 0$ in $\R^n$ and there exists an increasing, unbounded sequence $\set{R_j}_{j = 1}^\iny \su \R_+$ with 
\begin{equation}
\label{uIntBound0}
\lim_{j \to \iny} \int_{A\pr{R_j, R_j + 1}} \abs{u(x)}^2 e^{-2 \ga \sqrt{V_0} \abs{x}} dx = 0
\end{equation}
for some $\ga < 1$, then $u \equiv 0$.
\end{thm}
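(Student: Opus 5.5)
The approach is an Agmon-type weighted energy identity with a cutoff, which here plays the role that Lemma \ref{MainUpperBoundLem0} will play; it is essentially a Caccioppoli inequality with an exponential weight. Fix $\ga < \ga' < 1$ and set $\phi(x) = \ga' \sqrt{V_0}\,\abs{x}$, so $\abs{\gr \phi} = \ga'\sqrt{V_0}$ almost everywhere. For a Lipschitz cutoff $\eta$ with compact support, test \eqref{weakEigen} with $\vp = u\,\eta^2 e^{-2\phi}$. Then write $w = u \eta e^{-\phi}$ and expand $\gr w$.

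The standard computation gives
\[
\int \abs{\gr w}^2 + V_0 w^2 = \int u^2 e^{-2\phi}\abs{\gr(\eta e^{-\phi})e^{\phi}}^2
\le \int u^2 e^{-2\phi}\pr{\eta\abs{\gr\phi} + \abs{\gr \eta}}^2 .
\]
This follows from the usual identity $\int \gr u\cdot\gr(u\psi^2) = \int \abs{\gr(u\psi)}^2 - u^2\abs{\gr\psi}^2$ with $\psi = \eta e^{-\phi}$. Expanding the square and using $\abs{\gr \phi}^2 = \ga'^2 V_0$, the term $\eta^2 \ga'^2 V_0 u^2 e^{-2\phi}$ is absorbed by the left side, since $\ga'<1$. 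Dropping the gradient term leaves
\[
(1-\ga'^2)V_0\int \eta^2 u^2 e^{-2\phi} \le \int u^2 e^{-2\phi}\pr{2\ga'\sqrt{V_0}\,\eta\abs{\gr\eta} + \abs{\gr\eta}^2}.
\]
The right-hand side lives only on $\supp \gr\eta$.

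Now choose $\eta \equiv 1$ on $B_{R_j}$, decaying linearly to $0$ on $A(R_j,R_j+1)$, so $\abs{\gr\eta}\le 1$ there. The right side is then bounded by $C(V_0)\int_{A(R_j,R_j+1)} u^2 e^{-2\ga'\sqrt{V_0}\abs{x}}$. Since $\ga'>\ga$, we have $e^{-2\ga'\sqrt{V_0}\abs x}\le e^{-2\ga\sqrt{V_0}\abs x}$. By \eqref{uIntBound0}, this bound tends to $0$ as $j\to\iny$. Hence for any fixed $R$ and $R_j > R$,
\[
\int_{B_R} u^2 e^{-2\ga'\sqrt{V_0}\abs x}\le \frac{C}{(1-\ga'^2)V_0}\int_{A(R_j,R_j+1)}u^2e^{-2\ga\sqrt{V_0}\abs x}\to 0 .
\]
So $u=0$ a.e.\ on every ball, i.e.\ $u\equiv 0$. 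Actually one may even take $\ga'=\ga$ directly; the strict inequality $\ga<1$ is all that is needed for the absorption.

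The main obstacle is justifying the test function. The function $\vp = u\eta^2e^{-2\phi}$ must lie in $H^1_0$ with compact support. This holds because $\eta$ is compactly supported and $e^{-2\phi}$ is Lipschitz and bounded on compact sets. The product rule for $H^1_{\loc}$ times Lipschitz functions then gives the identity rigorously.

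The absorption step is where the threshold $\ga<1$ enters, which is exactly $\abs{\gr\phi}^2< V_0$. In the general setting this pointwise comparison fails, and it must be replaced by the Fefferman--Phong inequality with the weight built from $d_V$. I expect that replacement to be the real difficulty later, but here it is immediate.
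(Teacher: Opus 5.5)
Your proof is correct and follows essentially the paper's route: test with $u\,\eta^2 e^{-2\phi}$, use the identity $\abs{\gr(u\psi)}^2 = \gr u\cdot\gr(u\psi^2) + u^2\abs{\gr\psi}^2$ to absorb the weight-gradient term via $\gamma<1$ (the content of Lemma \ref{MainUpperBoundLem0}), then use cutoffs equal to $1$ on $B_{R_j}$ with gradient supported in $A(R_j,R_j+1)$ and let $j\to\infty$. The differences are cosmetic: you use the Lipschitz weight $\gamma'\sqrt{V_0}\abs{x}$ rather than the smoothed $\gamma\sqrt{V_0\abs{x}^2+1}$, you bound the cross term directly on the annulus using $0\le\eta\le 1$ instead of Young's inequality with a parameter $\mu$, and you need $\gamma'\in[0,1)$ (always possible with $\gamma'>\gamma$) so that $\abs{\gr\phi}=\gamma'\sqrt{V_0}$.
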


A consequence of Theorem \ref{MainThm0} is the following Liouville-type statement for solutions to $-\LP u + V_0 u = 0$ in $\R^n$.
To see that Theorem \ref{MainThm0} implies Corollary \ref{LiouvilleCor0}, take $\ga = \frac{1 + \ga_1}{2}$ and $R_j = j$ for each $j \in \N$.

\begin{cor}[Example Liouville-type result]
\label{LiouvilleCor0}
Let $u \in H^1_{\loc}(\R^n)$ be a weak solution to $- \LP u + V_0 u = 0$ in $\R^n$.
If $u$ has bounded growth in the sense that for some $\ga_1 \in (0, 1)$, there exists $C_{\ga_1} > 0$ so that for each $x \in \R^n$,
\begin{equation}
\label{uBound0}
\abs{u(x)} \le C_{\ga_1} e^{\ga_1 \sqrt{V_0} \abs{x}},
\end{equation}
then $u \equiv 0$.
\end{cor}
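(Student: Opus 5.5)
The plan is to deduce Corollary \ref{LiouvilleCor0} directly from Theorem \ref{MainThm0}, as the remark preceding the statement indicates. Fix $\ga_1 \in (0,1)$ and $C_{\ga_1}$ as in \eqref{uBound0}. Choose $\ga = \frac{1+\ga_1}{2}$, so that $\ga_1 < \ga < 1$, and take $R_j = j$ for $j \in \N$. This sequence is increasing and unbounded, so it only remains to verify the limit condition \eqref{uIntBound0} for this $\ga$. Theorem \ref{MainThm0} then gives $u \equiv 0$.

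To verify \eqref{uIntBound0}, insert the pointwise bound \eqref{uBound0} into the integrand. On $A(j, j+1)$ we have
\[
\abs{u(x)}^2 e^{-2\ga\sqrt{V_0}\abs{x}} \le C_{\ga_1}^2 e^{-2(\ga - \ga_1)\sqrt{V_0}\abs{x}} \le C_{\ga_1}^2 e^{-2(\ga-\ga_1)\sqrt{V_0}\, j},
\]
since $\abs{x} > j$ there and $\ga - \ga_1 > 0$. The annulus has volume $\abs{A(j,j+1)} = \omega_n\pr{(j+1)^n - j^n} \le C_n (j+1)^{n-1}$. Hence
\[
\int_{A(j, j+1)} \abs{u}^2 e^{-2\ga\sqrt{V_0}\abs{x}}\, dx \le C_n C_{\ga_1}^2 (j+1)^{n-1} e^{-(1-\ga_1)\sqrt{V_0}\, j}.
\]
The right-hand side tends to $0$ as $j \to \iny$, because exponential decay beats polynomial growth and $(1-\ga_1)\sqrt{V_0} > 0$.

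There is no real obstacle here. The only points to check are these:
\begin{itemize}
\item The integrand is measurable and the pointwise bound holds almost everywhere. This is fine since $u \in H^1_{\loc}$; in fact $u$ is smooth, being a classical solution.
\item The strict inequality $\ga_1 < \ga$ is used to produce genuine exponential decay. If instead $\ga = \ga_1$, the polynomial volume factor $(j+1)^{n-1}$ would not be absorbed.
\end{itemize}
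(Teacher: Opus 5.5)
Your proposal is correct and follows the paper's route exactly: choose $\ga = \frac{1+\ga_1}{2}$ and $R_j = j$, then apply Theorem \ref{MainThm0}. Your estimate of the annular integrals by $C_n C_{\ga_1}^2 (j+1)^{n-1} e^{-(1-\ga_1)\sqrt{V_0}\, j}$ supplies the verification of \eqref{uIntBound0} that the paper leaves implicit here, and it mirrors the paper's proof of Corollary \ref{LiouvilleCor}.
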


To understand Corollary \ref{LiouvilleCor0}, we consider some examples.
When $n = 1$, all non-trivial solutions to $- u'' + V_0 u = 0$ are of the form $u(x) = c e^{\pm \sqrt{V_0} x}$ and these fail to satisfy the assumption \eqref{uBound0}.
When $n = 2$, the separable solutions to $- \LP u + V_0 u = 0$ are
$$u_z(x_1, x_2) = \exp\pr{\pm \sqrt{V_0 - z} \, x_1 \pm \sqrt z \, x_2},$$
where $z \in \R$.
When $z \in (-\iny, 0]$, $u_z$ fails condition \eqref{uBound0} in the $\pm x_1$-direction; while if $z \in [V_0, \iny)$, $u_z$ fails condition \eqref{uBound0} in the $\pm x_2$-direction.
Even if $z \in (0, V_0)$, $u_z$ fails condition \eqref{uBound0} along one of the lines $\disp x_1 =\pm \frac{\sqrt{V_0 - z}}{\sqrt z} x_2$.
In higher dimensions, we can construct similar examples that support the Liouville-type statement above.

If we restrict to exterior domains, the Liouville-type statement above is no longer true.
For example, $u(x) = c e^{- \sqrt{V_0} \abs{x}}$ solves $- u'' + V_0 u = 0$ in $\R \setminus (-1,1)$ and is exponentially decaying.
In the punctured plane, the modified Bessel function of the second kind with $\al = 0$ gives a radial, exponentially decreasing solution to $- \LP u + V_0 u = 0$ in $\R^2 \setminus B_1$.
Accordingly, it is necessary for us to consider global solutions.

To prove Theorem \ref{MainThm0}, we rely on the following lemma.

\begin{lem}[Example main lemma]
\label{MainUpperBoundLem0}
Let $u \in H^1_{\loc}(\R^n)$ be a weak solution to $- \LP u + V_0 u = 0$ in $\R^n$.
If $\phi \in C_c ^\infty (\R^n)$, then for any $\ga < 1$, there exists a constant $C_\ga > 0$ so that
\begin{align*}
\int_{\R^n} \abs{\sqrt{V_0} \, u(x) \, \phi(x) }^2 e^{- 2 \ga \sqrt{V_0 \abs{x}^2 + 1} } dx
&\le C_\ga \int_{\R^n} \abs{u(x)}^2 \abs{\gr \phi(x)}^2 e^{-2 \ga \sqrt{V_0 \abs{x}^2 + 1}} dx.
\end{align*}
\end{lem}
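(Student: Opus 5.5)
We prove the estimate by Agmon's argument: test the weak formulation \eqref{weakEigen} against $\vp = u \phi^2 e^{-2 g}$, where
$$g(x) = \ga \sqrt{V_0 \abs{x}^2 + 1}.$$
Since $\phi \in C_c^\infty$ and $g$ is smooth, $\vp \in H^1_0$ with compact support, so it is admissible. The key property of this weight is the pointwise bound
$$\abs{\gr g(x)} = \ga V_0 \abs{x} \pr{V_0\abs{x}^2+1}^{-1/2} \le \ga \sqrt{V_0}, \quad\text{so}\quad \abs{\gr g}^2 \le \ga^2 V_0.$$
This is why the smoothed distance $\sqrt{V_0\abs{x}^2+1}$ is used instead of $\sqrt{V_0}\abs{x}$: it is smooth at the origin and still has gradient controlled by $\sqrt{V_0}$.

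The cleanest route is to set $w = u \phi e^{-g}$ and compute $\gr u\cdot\gr(u\phi^2e^{-2g})$ in terms of $w$. The standard identity is
$$\gr u \cdot \gr\pr{u \psi^2} = \abs{\gr(u\psi)}^2 - u^2\abs{\gr \psi}^2, \qquad \psi = \phi e^{-g}.$$
Substituting into \eqref{weakEigen} gives
$$\int \abs{\gr w}^2 + V_0 \abs{w}^2 = \int u^2 \abs{\gr(\phi e^{-g})}^2 .$$
Now expand $\gr(\phi e^{-g}) = e^{-g}\pr{\gr\phi - \phi \gr g}$. For any $\eps>0$,
$$\abs{\gr\phi - \phi\gr g}^2 \le (1+\eps)\phi^2\abs{\gr g}^2 + (1+\eps^{-1})\abs{\gr\phi}^2 .$$
Dropping $\int\abs{\gr w}^2 \ge 0$ and using $\abs{\gr g}^2\le \ga^2 V_0$, we get
$$V_0\int u^2\phi^2 e^{-2g} \le (1+\eps)\ga^2 V_0 \int u^2\phi^2e^{-2g} + (1+\eps^{-1})\int u^2\abs{\gr\phi}^2 e^{-2g}.$$

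Since $\ga<1$ (and we may assume $\ga \ge 0$, the case $\ga<0$ being similar or trivial by taking $\abs{\ga}$ in the bound, since $\abs{\gr g}^2 = \ga^2 \cdots$ regardless of sign), choose $\eps$ so that $(1+\eps)\ga^2<1$, for instance $1+\eps = \frac{2}{1+\ga^2}$. All integrals are finite because $\phi$ has compact support and $u\in L^2_{\loc}$, so the first term on the right can be absorbed into the left. This yields the claim with
$$C_\ga = \frac{1+\eps^{-1}}{1-(1+\eps)\ga^2}.$$
The only delicate step is the identity for $\gr u\cdot\gr(u\psi^2)$ in the weak setting. It holds pointwise a.e. for $u\in H^1_{\loc}$ by the product rule, so there is no real obstacle; the main point is simply the choice of a weight whose gradient is bounded by $\ga\sqrt{V_0}$ with $\ga<1$.
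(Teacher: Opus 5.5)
Your proposal is correct and is essentially the paper's proof. It uses the same test function $u\phi^2e^{-2\ga\sqrt{V_0\abs{x}^2+1}}$, the same identity $\abs{\gr(u\psi)}^2=\gr u\cdot\gr(u\psi^2)+u^2\abs{\gr\psi}^2$, the bound $\abs{\gr\sqrt{V_0\abs{x}^2+1}}\le\sqrt{V_0}$, and then Young's inequality and absorption; your choice $1+\eps=2/(1+\ga^2)$ even reproduces the paper's constant $2(1+\ga^2)/(1-\ga^2)^2$.

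One small caveat, shared with the paper: the absorption needs $\ga^2<1$, so your aside on negative $\ga$ is only valid for $-1<\ga<0$. For $\ga=-1$ the estimate genuinely fails, e.g.\ with $n=1$, $V_0=1$, $u=e^{-x}$ and wide bumps $\phi$ far to the right.
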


\begin{proof}
Set $g(x) = \sqrt{V_0 \abs{x}^2 + 1}$.
For $\phi \in C_c ^\infty (\R^n)$, $\ga < 1$, let $f(x) = \phi(x) e^{-\ga g(x)}$.
Set $\vp = u f^2$ and note that $\vp \in H^1_0(\R^n)$ with compact support equal to $\supp \phi$.
Set $h = u f$ and note that $h \in H^1_0(\R^n)$ with compact support as well.
Moreover, $h^2 = u \vp$ and since
\begin{align*}
\abs{\gr h}^2
&= \abs{f \gr u + u \gr f}^2
= f^2 \abs{\gr u}^2 + u^2 \abs{\gr f}^2 + 2 u f \gr u \cdot \gr f  \\
\gr u \cdot \gr \vp
&= \gr u \cdot \gr\pr{u f^2}
= \gr u \cdot \pr{f^2\gr u + 2 u f \gr f}
= f^2 \abs{\gr u}^2 + 2 u f \gr u \cdot \gr f,
\end{align*}
then
\begin{align*}
\abs{\gr h}^2
&= \gr u \cdot \gr \vp
+ u^2 \abs{\gr f}^2.
\end{align*}
Now observe that
\begin{align*}
V_0 \int \abs{u \phi }^2 e^{- 2 \ga g}
&= \int V_0 \abs{h}^2
\le \int \abs{\gr h}^2 + V_0 \abs{h}^2
= \int \gr u \cdot \gr \vp + V_0 u \vp
+ \int u^2 \abs{\gr f}^2 
=  \int u^2 \abs{\gr f}^2,
\end{align*}
where we have used the equation \eqref{weakEigen}. 
Since 
$$\gr f = \pr{\gr \phi - \ga \sqrt{V_0} \phi \frac{x}{ \sqrt{x^2 + V_0^{-1}}} } e^{- \ga g},$$
then for any $\mu > 0$, 
$$\abs{\gr f}^2 \le \brac{\pr{1 + \mu^{-1}} \abs{\gr \phi}^2 + \pr{1 + \mu}\ga^2 V_0 \abs{\phi}^2} e^{- 2\ga g}$$
so that
\begin{align*}
V_0 \int \abs{u \phi }^2 e^{- 2 \ga g}
&\le \pr{1 + \mu^{-1}} \int u^2 \abs{\gr \phi}^2 e^{- 2 \ga g}
+ \pr{1 + \mu}\ga^2 V_0 \int \abs{ u \phi}^2 e^{- 2 \ga g}.
\end{align*}
Since $\ga < 1$, then we may choose $\mu > 0$ so that $\pr{1 + \mu}\ga^2 = \frac{1 + \ga^2}{2} < 1$.
Absorbing the second term on the right into the left and simplifying shows that
\begin{align*}
V_0 \int \abs{ u \phi}^2 e^{- 2 \ga g}
&\le 2 \frac{1 + \ga^2}{\pr{1 - \ga^2}^2} \int u^2 \abs{\gr \phi}^2 e^{- 2 \ga g}
\le \pr{\frac{2}{1 - \ga^2}}^2 \int u^2 \abs{\gr \phi}^2 e^{- 2 \ga g},
\end{align*}
as required.
\end{proof}

With this version of the main lemma, we now prove our simplified version of the main theorem.

\begin{proof}[Proof of Theorem \ref{MainThm0}]
Assume for the sake of contradiction that $u \not\equiv 0$ on $\R^n$.
In this case, there exists $R_0 \ge 1$ so that with $B_{R_0} := B_{R_0}(0)$, 
\begin{equation}
\label{nontrivial0}
\norm{u}_{L^2(B_{R_0})} = c_0 > 0.
\end{equation}

Let $\set{R_j}_{j = 1}^\iny \su \R_+$ be an unbounded, increasing sequence for which \eqref{uIntBound0} holds with $\ga < 1$.
Without loss of generality, $R_j \ge R_0$ for each $j \in \N$.

For each $j \in \N$, let $\phi_j \in C_c ^\infty(B_{R_j+1}(0))$ be a cutoff function with $\phi_j \equiv 1$ in the ball $B_{R_j} := B_{R_j}(0)$ and $|\nabla \phi_j | \leq 2$ in the annulus $A(R_j, R_j + 1) := A(0, R_{j}, R_{j} + 1)$.

With $\phi = \phi_j$ and $g(x) := \sqrt{V_0 \abs{x}^2 + 1}$, we apply Lemma \ref{MainUpperBoundLem0} to get
\begin{align*}
V_0 \int_{B_{R_j}} \abs{u }^2 e^{- 2 \ga g}
&\le \int_{\R^n} \abs{\sqrt{V_0} u \phi_j }^2 e^{- 2 \ga g}
\le C_\ga \int_{\R^n} \abs{u}^2 \abs{\gr \phi_j}^2 e^{-2 \ga g} 
\le 4 C_\ga \int_{A(R_j, R_j + 1)} \abs{u}^2 e^{-2 \ga \sqrt{V_0} \abs{x}}.
\end{align*}
Since $R_j \ge R_0$, we get
\begin{equation*}
\label{lowerBound}
\begin{aligned}
\int_{B_{R_j}} \abs{u }^2 e^{- 2 \ga g}
\ge \int_{B_{R_0}} \abs{u }^2 e^{- 2 \ga g}
\ge e^{- 2 \ga g(R_0)} \int_{B_{R_0}} \abs{u(x)}^2 
= e^{- 2 \ga \sqrt{V_0 R_0^2 + 1}} c_0^2,
\end{aligned}
\end{equation*}
where we have used the assumption in \eqref{nontrivial0}.
That is, for each $j \in \N$.
\begin{align*}
\frac{V_0 c_0^2}{4 C_\ga  e^{2 \ga \sqrt{V_0 R_0^2 + 1}} }
&\le \int_{A(R_j, R_j + 1)} \abs{u}^2 e^{-2 \ga \sqrt{V_0} \abs{x}}.
\end{align*}
By taking $j \to \iny$ and applying \eqref{uIntBound0}, we conclude that $c_0 = 0$.
In other words, we reach the desired contradiction and the conclusion follows.
\end{proof}

\section{The Reverse H\"older class and Agmon distance}
\label{RHetc}

In this section, we introduce the reverse H\"older functions, the class of potential functions that we consider.
We state a number of the properties of reverse H\"older functions that lead us to define the Agmon distance.
The concepts that we develop in this section were introduced by Zhongwei Shen.
Specifically, what we call the averaged function $\psi_V$ and the auxiliary function $m_V$ were introduced in \cite{She94} and further developed in \cite{She95, She96, She99}, while the distance function $d_V$ appears in \cite{She96, She99}. 

\begin{defn}[$RH_p$ functions]
\label{BpDefn}
A function $V : \R^n \to \R_{\ge 0}$ belongs to the \textbf{reverse H\"older class} $RH_p$ if $V \in L^p_{\loc}(\R^n)$, $V > 0$ a.e., and there exists a constant $C_V$ so that for every ball $B \su \R^n$,
\begin{equation}
\label{BpDefOne}
\begin{cases}
\disp \pr{\fint_B \brac{V\pr{x}}^p dx}^{\frac 1 p} \le C_V \fint_B V\pr{x} dx & p \in (1, \iny) \\
\disp \norm{V}_{L^\iny(B)} \le C_V \fint_B V\pr{x} dx & p = \iny
\end{cases}.
\end{equation}
We call $C_V = C_{V,p}$ the \textbf{uniform $RH_p$ constant for $V$}.
\end{defn}

\begin{rems}
While the zero function satisfies \eqref{BpDefOne}, we assume that $V > 0$ a.e. to exclude trivial functions from the $RH_p$ classes.
\end{rems}

\begin{ex}[Reverse H\"older functions] 
$\quad$
\begin{itemize}
\item[(a)] If $P(x)$ is a polynomial and $\al > 0$, then $\abs{P}^\al \in RH_p$ for every $p > 1$.
\item[(b)] $V(x) := \abs{x}^\al \in RH_{\frac n 2}$ whenever $\al > -2$.
\end{itemize}
\end{ex}

We first state Gehring's Lemma, a self-improvement result.
For the proof of Gehring's Lemma, see for example the dyadic approach in \cite{Per01}.

\begin{lem}[Gehring's Lemma]
\label{GehringLemma}
If $V \in RH_p$, then there exists $\eps(p, C_V) > 0$ so that $V \in RH_{p+\eps}$.
In particular, $V \in RH_q$ for all $q \in \brac{1, p + \eps}$.
Moreover, if $q \le s$, then $C_{V, q} \le C_{V, s}$.
\end{lem}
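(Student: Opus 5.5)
The statement has three parts. The last two are easy, and the first (self-improvement from $p$ to $p+\eps$) is the substance. For the monotonicity claims, take $1 \le q \le s$ and apply H\"older's (Jensen's) inequality on an arbitrary ball $B$:
$$\pr{\fint_B V^q}^{\frac 1 q} \le \pr{\fint_B V^s}^{\frac 1 s} \le C_{V,s} \fint_B V.$$
This shows $V \in RH_q$ with $C_{V,q} \le C_{V,s}$, including $s = \iny$ by reading the $L^\iny$ norm in place of the $s$-average. So once we know $V \in RH_{p+\eps}$, membership in $RH_q$ for all $q \in [1, p+\eps]$ follows immediately. It remains to produce $\eps$. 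I only treat $p<\iny$, since for $p=\iny$ the statement should be read as the monotonicity part alone.

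For the self-improvement I would follow the dyadic stopping-time proof (as in \cite{Per01}), working on cubes rather than balls.

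\emph{Step 1 (cubes and doubling).} Since every cube lies in a ball of comparable volume and vice versa, the reverse H\"older inequality transfers to cubes. I also need $V\,dx$ to be doubling. From $RH_p$ and H\"older, for $E \su Q$,
$$\int_E V \le |E|^{1 - \frac 1 p} \pr{\int_Q V^p}^{\frac 1 p} \le C_V \pr{\frac{|E|}{|Q|}}^{1 - \frac1p} \int_Q V,$$
which is an $A_\iny$-type condition. This yields doubling of $V\,dx$ and lets me compare averages over a dyadic cube and its parent. That comparison uses the reverse direction, which follows by applying the estimate to the complementary set.

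\emph{Step 2 (good-$\la$ inequality).} Fix a cube $Q_0$ and normalize so that $\fint_{Q_0} V = 1$. Let $M_d$ be the dyadic maximal operator localized to $Q_0$. For $\la > 1$, take the Calder\'on--Zygmund cubes $\set{Q_j}$: the maximal dyadic subcubes of $Q_0$ with $\fint_{Q_j} V > \la$. Maximality gives $\fint_{Q_j} V \le 2^n \la$, and $\set{V > \la} \su \bigcup Q_j$ a.e. by Lebesgue differentiation. Applying $RH_p$ on each $Q_j$ and summing,
$$\int_{\set{V>\la}\cap Q_0} V^p \le \sum_j \int_{Q_j} V^p \le C \la^{p-1} \sum_j \int_{Q_j} V.$$
Next I split $\int_{Q_j} V = \int_{Q_j \cap \set{V \le \be\la}} V + \int_{Q_j \cap \set{V > \be\la}} V$ with a small $\be$. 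The first piece is at most $\be\la|Q_j| < \be \int_{Q_j} V$, so it can be absorbed for $\be = \frac12$. The result is the key inequality
$$\int_{\set{V>\la}\cap Q_0} V^p \le C \la^{p-1} \int_{\set{V > \la/2}\cap Q_0} V \qquad (\la > 1).$$

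\emph{Step 3 (integration in $\la$).} Multiply both sides by $\la^{\eps - 1}$, integrate over $\la \in (1,\iny)$, and apply Fubini. The left side becomes $\frac 1\eps \int_{Q_0 \cap \set{V > 1}} V^p (V^\eps - 1)$. The right side is bounded by $\frac{C 2^{p-1+\eps}}{p-1+\eps} \int_{Q_0} V^{p+\eps}$. For small $\eps$ the right-hand constant times $\eps$ is less than $1$, so the $\int V^{p+\eps}$ term can be absorbed. What remains is controlled by $\int_{Q_0} V^p + |Q_0| \le C|Q_0|$, which gives $\fint_{Q_0} V^{p+\eps} \le C$. Undoing the normalization, this is $RH_{p+\eps}$ with constants depending only on $n, p, C_V$.

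\emph{Main obstacle.} The absorption in Step 3 is only legitimate if $\int_{Q_0} V^{p+\eps} < \iny$, which is not known a priori. I would handle this by running the argument with the truncation $V_k = \min(V,k)$ on the left-hand side of Step 2. The key inequality persists for $V_k$ when $\la < k$, and is trivial for $\la \ge k$ since then $\set{V_k > \la}$ is empty. Absorption then works for each $k$ with bounds uniform in $k$, and monotone convergence removes the truncation. Checking that the constants in Step 2 survive truncation is the delicate point.
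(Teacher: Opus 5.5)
Your overall route is the dyadic Calder\'on--Zygmund argument the paper points to; it gives no proof of this lemma and only cites \cite{Per01}. The pieces you actually carry out are correct: the monotonicity via Jensen, the level-set inequality $\int_{\set{V>\la}\cap Q_0}V^p\le C\la^{p-1}\int_{\set{V>\la/2}\cap Q_0}V$ for $\la>1$, and the Fubini computation. The gap is the a priori finiteness step you flag, and the truncation you sketch does not close it as stated.

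If you replace $V$ by $V_k=\min(V,k)$ only on the left, the right-hand side is unchanged. Integrating over $\la\in(1,\iny)$ then still produces $\int_{Q_0}V^{p+\eps}$, which may be infinite and cannot be absorbed into $\frac1\eps\int V_k^p(V_k^\eps-1)$. If you integrate only over $\la\in(1,k)$ (legitimate, since the left side vanishes for $\la\ge k$), the right side becomes $\frac{C}{p-1+\eps}\int_{Q_0}V\min(2V,k)^{p-1+\eps}$. On $\set{V>k}$ this integrand is $k^{p-1+\eps}V$, whereas the left side only offers $V_k^{p+\eps}=k^{p+\eps}$ there. The excess factor $V/k$ is unbounded, so your claim that ``absorption then works for each $k$ with bounds uniform in $k$'' needs an argument you have not given. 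The quantity $\int V_k^{p+\eps}$ is simply not the right a priori finite quantity.

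The clean fix is to truncate only the extra power and keep $V^p$ intact. Integrate your key inequality against $\la^{\eps-1}$ over $(1,k)$, and use $\min(2V,k)\le 2V_k$ together with $V_k^{p-1}\le V^{p-1}$. This gives
\begin{align*}
\frac1\eps\int_{Q_0\cap\set{V>1}}V^p\pr{V_k^\eps-1}
\le\frac{C}{p-1+\eps}\int_{Q_0}V\min(2V,k)^{p-1+\eps}
\le\frac{C\,2^{p-1+\eps}}{p-1+\eps}\int_{Q_0}V^pV_k^{\eps}.
\end{align*}
Now $\int_{Q_0}V^pV_k^\eps\le k^\eps\int_{Q_0}V^p$ is finite a priori because $V\in L^p_{\loc}$. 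Add $\int_{Q_0\cap\set{V\le1}}V^pV_k^\eps\le\abs{Q_0}$ and absorb for small $\eps$. You obtain $\int_{Q_0}V^pV_k^\eps\le2\pr{\abs{Q_0}+\int_{Q_0}V^p}$ uniformly in $k$, and monotone convergence as $k\to\iny$ finishes.

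Alternatively, you can prove the level-set inequality with $V_k$ on both sides for $\la<k$, but this needs one more use of $RH_p$ on each $Q_j$. H\"older plus Chebyshev (with $\fint_{Q_j}V\le 2^n\la$) give $\int_{Q_j\cap\set{V>A\la}}V\le\frac12\int_{Q_j}V$ for some $A(n,p,C_V)$. Hence $\int_{Q_j}V\le2A\int_{Q_j}V_k$, at the price of a smaller multiple of $\la$ on the right.

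With either repair the rest of your argument goes through. Note that Step 3 genuinely uses $p>1$, which is consistent with the paper's definition of $RH_p$.
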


Since $RH_p \su A_\iny$, the reverse H\"older functions are doubling.

\begin{lem}[Doubling result]
\label{Vdbl}
If $V \in RH_p$, then $V$ is a doubling measure.
That is, there exists a doubling constant $\de = \de\pr{n, p, C_V} > 0$ so that for every $x \in \R^n$ and every $r > 0$,
\begin{align*}
\int_{B_{2r}(x)} V\pr{y} dy \le \de \int_{B_r(x)} V\pr{y} dy.
\end{align*}
\end{lem}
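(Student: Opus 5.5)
The plan is to deduce the doubling property from the reverse H\"older inequality via the standard $A_\iny$-type comparison of measures of subsets. The key intermediate claim is the following: for any ball $B$ and any measurable $E \su B$,
$$\frac{\int_E V}{\int_B V} \le C_V \pr{\frac{\abs{E}}{\abs{B}}}^{1 - \frac 1 p},$$
with the convention $1 - \frac 1 p = 1$ when $p = \iny$. To prove it, apply H\"older's inequality to write $\int_E V \le \abs{E}^{1-\frac1p} \pr{\int_B V^p}^{\frac 1p}$. Then invoke \eqref{BpDefOne} to bound $\pr{\int_B V^p}^{\frac1p} = \abs{B}^{\frac 1p}\pr{\fint_B V^p}^{\frac1p} \le C_V \abs{B}^{\frac1p}\fint_B V$. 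Rearranging gives the claim. The case $p = \iny$ is identical, using $\norm{V}_{L^\iny(B)}$.

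Next, turn this upper bound into a lower bound for large subsets by applying it to the complement. Take $B = B_{2r}(x)$ and $E = B \setminus B_r(x)$, so that $\abs{E}/\abs{B} = 1 - 2^{-n}$. The claim gives
$$\int_{B_{2r}(x)\setminus B_r(x)} V \le C_V \pr{1 - 2^{-n}}^{1-\frac1p}\int_{B_{2r}(x)} V.$$
This step is the main obstacle, because it only helps if $\theta := C_V(1-2^{-n})^{1-\frac1p} < 1$, and that need not hold for large $C_V$.

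To get around this, I will not compare $B_r$ with $B_{2r}$ directly. Instead, I will work with a small ball $E' \su B_{2r}(x)$ whose proportion is small. The cleanest route is to use the claim in the contrapositive form: if $F \su B$ satisfies $\abs{F} \le \eta \abs{B}$ with $\eta$ chosen so that $C_V \eta^{1-\frac1p} \le \frac12$, then $\int_{B\setminus F} V \ge \frac12 \int_B V$.

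Cover $B_{2r}(x)$ by $N = N(n,\eta)$ balls $B_{\rho}(z_i)$ of radius $\rho = c(n,\eta) r$, each small enough that $\abs{B_\rho} \le \eta \abs{B_{r}(x)}$.

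Then chain from $B_r(x)$ outward. Any ball $B_\rho(z)$ with $z \in B_{2r}(x)$ sits inside $B_{3r}(z) \supset B_{2r}(x)$. Alternatively, one can simply use that $\int_{B_{2r}(x)} V \le \sum_i \int_{B_\rho(z_i)} V$. So it suffices to bound each small-ball integral by a constant times $\int_{B_r(x)} V$.

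For that bound, I will connect $B_\rho(z_i)$ to $B_r(x)$ through a bounded number, depending on $n$ and $\eta$, of intermediate comparable balls. At each link I use the lower bound: for two balls $B' \su B''$ with $\abs{B'} \ge (1-\eta)\abs{B''}$, the inequality $\int_{B''\setminus B'} V \le \frac12\int_{B''} V$ gives $\int_{B''} V \le 2\int_{B'} V$. Growing a ball by a factor $(1-\eta)^{-1/n}$ at each step, $O(n/\eta)$ steps suffice to pass from $B_r(x)$ to a ball $B_{4r}(x) \supset B_{2r}(x)$. This yields $\int_{B_{2r}(x)} V \le 2^{K}\int_{B_r(x)} V$ with $K = K(n,p,C_V)$, which is the doubling property with $\de = 2^K$.
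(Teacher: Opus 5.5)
Your proof is correct, but it takes a different route from the paper. The paper gives no argument for this lemma; it only remarks that $RH_p \subset A_\infty$ and that $A_\infty$ weights are doubling, which leans on standard weight theory. The version of $A_\infty$ that yields doubling in one line is $V \in A_q$ for some $q<\infty$, via $V(B)\le C(|B|/|E|)^q V(E)$ for $E \subset B$. Getting from the reverse H\"older inequality to that lower bound is the harder direction of the $A_\infty$ equivalences.

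You instead prove directly the easy upper bound $\int_E V \le C_V (|E|/|B|)^{1-1/p}\int_B V$ from H\"older and \eqref{BpDefOne}. You correctly note that this alone does not compare $B_r(x)$ with $B_{2r}(x)$ when $C_V$ is large. Your concentric chain resolves this. Take $\eta=(2C_V)^{-p/(p-1)}$ and let the radii grow by the factor $(1-\eta)^{-1/n}$. Each shell is then exactly an $\eta$-fraction of the larger ball, so each step costs a factor $2$. After $K=\lceil n\log 2/\eta\rceil$ steps the radius exceeds $2r$, giving $\de=2^K$, which depends only on $n,p,C_V$. You gain a self-contained, elementary proof with an explicit constant. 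You lose only in the size of $\de$ (roughly exponential in $(2C_V)^{p/(p-1)}$), which is irrelevant here.

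Two cleanups are worth making:
\begin{itemize}
\item Delete the intermediate paragraphs about covering $B_{2r}(x)$ by small balls $B_\rho(z_i)$ and about $B_{3r}(z)$. They play no role, since the concentric chain alone finishes the proof, and you can stop once the radius passes $2r$ rather than $4r$.
\item Say explicitly that $1-\frac1p>0$ requires $p>1$, which Definition \ref{BpDefn} guarantees.
\end{itemize}
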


\begin{defn}[Averaged function, cf. (1.1) in \cite{She94}]
Given a function $V : \R^n \to \R$, its {\bf averaged function} $\psi_V : \R^n \times \R_+ \to \R$ is defined as
\begin{align}
\psi_V(x, r) = r^2 \fint_{B_r(x)} V\pr{y} dy.
\label{eqB.2}
\end{align}
\end{defn}

Observe that the averaged function respects scaling in the following way:
If $V_R(y) := R^2 V(x_0 + Ry)$, then 
\begin{align*}
\psi_{V_R}(0, r)
&= r^2 \fint_{B_r(0)} V_R\pr{y} dy
= r^2 \fint_{B_r(0)} R^2 V(x_0 + Ry) dy
= \psi_V(x_0, rR). 
\end{align*}
Since $\psi_{V_R}(0, r) = \psi_V(x_0, rR)$, where $V_R$ is the natural rescaling of $V$, then we may think of $\psi_V$ as having a ``scale-invariant" property.

\begin{lem}[Controlled growth, cf. Lemma 1.2 in \cite{She95}]
\label{BasicShenLem}
If $V \in RH_p$, then for any $0 < r < R  < \iny$,
\begin{align*}
\psi_V(x, r) \le C_V \pr{\frac{r}{R}}^{2 - \frac{n}{p}} \psi_V(x, R),
\end{align*}
\label{lB.1}
where $C_V$ is the uniform $RH_p$ constant for $V$.
\end{lem}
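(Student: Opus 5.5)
We prove Lemma \ref{lB.1} by comparing the averages of $V$ over the two concentric balls $B_r(x) \su B_R(x)$. First we write out the definition of the averaged function:
\begin{align*}
\psi_V(x,r) = r^2 \fint_{B_r(x)} V = \frac{r^2}{\abs{B_r}} \int_{B_r(x)} V .
\end{align*}
The task is then to bound $\int_{B_r(x)} V$ by a quantity involving only the average of $V$ over the larger ball $B_R(x)$.

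Assume first that $p \in (1,\iny)$. Hölder's inequality on $B_r(x)$ gives
\begin{align*}
\int_{B_r(x)} V \le \abs{B_r}^{1 - \frac 1 p} \pr{\int_{B_r(x)} V^p}^{\frac 1p} \le \abs{B_r}^{1-\frac1p} \pr{\int_{B_R(x)} V^p}^{\frac1p}.
\end{align*}
The last factor equals $\abs{B_R}^{\frac1p}\pr{\fint_{B_R(x)} V^p}^{\frac1p}$. Applying \eqref{BpDefOne} on the ball $B_R(x)$ bounds it by $C_V \abs{B_R}^{\frac1p} \fint_{B_R(x)} V$. We then divide by $\abs{B_r}$ and multiply by $r^2$:
\begin{align*}
\psi_V(x,r) \le C_V\, r^2 \pr{\frac{\abs{B_R}}{\abs{B_r}}}^{\frac1p} \fint_{B_R(x)} V = C_V\, r^2 \pr{\frac Rr}^{\frac np} R^{-2}\psi_V(x,R) = C_V \pr{\frac rR}^{2-\frac np}\psi_V(x,R).
\end{align*}
This is exactly the claimed estimate.

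For $p = \iny$ we argue directly. Since $\fint_{B_r(x)} V \le \norm{V}_{L^\iny(B_R(x))}$, the $RH_\iny$ condition on $B_R(x)$ gives $\fint_{B_r(x)} V \le C_V \fint_{B_R(x)} V$. Multiplying by $r^2$ yields the claimed estimate with exponent $2 - \frac n\iny = 2$.

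No step here is a real obstacle. The only point needing care is to apply the reverse Hölder inequality on the larger ball $B_R(x)$ rather than on $B_r(x)$. After that, the bookkeeping of the volume ratio $\pr{R/r}^{n/p}$ produces the exponent $2 - \frac np$, which is nonnegative because $p \ge \frac n2$.
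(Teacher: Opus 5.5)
Your proof is correct and takes essentially the same route: H\"older on $B_r(x)$, enlarging $\int V^p$ to $B_R(x)$, then the reverse H\"older inequality on $B_R(x)$ gives exactly the constant $C_V$ and the exponent $2-\frac np$, and your $p=\iny$ case is also right. The paper does not prove this lemma itself but cites \cite[Lemma 1.2]{She95}, whose proof is this same computation.
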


In the statement of Theorem \ref{MainThm}, we assume that $V \in RH_p$ for some $p \in \brac{\frac n 2, \iny}$.
Gehring's Lemma shows that there is no loss in assuming that $p > \frac n 2$.
By the doubling property and that $V > 0$ a.e., $V$ is nondegenerate in the sense that for every open set $U \su \R^n$, $\disp \int_U V(y) dy > 0$.
Therefore, $\psi_V(x, r) > 0$ for each $x \in \R^n$ and $r > 0$.
Since $2 - \frac n p > 0$, then it follows from Lemma \ref{lB.1} that
\begin{equation}
\label{eqB.3}
\begin{aligned}
& \lim_{r \to 0^+} \psi_V(x, r) = 0 \\
& \lim_{R \to \iny} \psi_V(x, R) = \iny.
\end{aligned}
\end{equation}
These observations allow us to make the following definition of $m_V$, the auxiliary function.

\begin{defn}[Auxiliary function, cf. (1.3) in \cite{She94}]
\label{auxFunc}
Let $V \in RH_p$ for some $p \in (\frac n 2, \iny]$.
The {\bf auxiliary function} $m_V : \Rn \rightarrow  (0, \infty)$ is defined as
\begin{align}
\label{mDef}
\frac{1}{m_V(x)} = \sup_{r > 0} \set{ r :\psi_V(x, r) \le 1}.
\end{align}
\end{defn}

The auxiliary function has a number of very useful properties.
These ideas appear in \cite{She94}, \cite{She95}, \cite{She99}, and \cite{MP19}, for example.
We recall the following lemma from \cite{She95}.

\begin{lem}[cf. Lemma 1.4, \cite{She95}]
\label{mBounds}
If $V \in RH_p$ for some $p \in (\frac n 2, \iny]$, then there exist constants $c_1, c_2, c_3, k_0 > 0$, depending on $n$, $p$, and $C_V$, so that for any $x, y \in \R^n$,
\begin{enumerate}
\item[(a)] If $\disp \abs{x - y} \le \frac{2}{m_V(x)}$, then $\disp c_1^{-1} m_V(x) \le  m_V(y) \le c_1 m_V(x)$;
\item[(b)] $\disp m_V(y) \le c_2 \pr{1 + \abs{x - y}m_V(x)}^{k_0} m_V(x)$;
\item[(c)] $\disp m_V(y) \ge \frac{c_3 \, m_V(x)}{\pr{1 + \abs{x -y}m_V(x)}^{k_0/\pr{k_0+1}}}$.
\end{enumerate}
\end{lem}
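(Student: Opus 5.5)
The plan is to derive all three parts from the scaling of $\psi_V$ and Lemma \ref{BasicShenLem}. Throughout I use that $r \mapsto \psi_V(x,r)$ is continuous and, by Lemma \ref{BasicShenLem} together with \eqref{eqB.3}, strictly greater than $1$ for $r$ large. Hence $\psi_V(x, 1/m_V(x)) = 1$. Write $\alpha = 2 - \frac np > 0$.

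\emph{Part (a).} Set $r = 1/m_V(x)$ and assume $\abs{x-y} \le 2r$.

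For the upper bound $m_V(y) \le c_1 m_V(x)$, note $B_r(y) \subset B_{3r}(x)$, so $\psi_V(y,r) \le 9 \cdot 3^n \fint_{B_{3r}(x)} V \cdot r^2$. By doubling (Lemma \ref{Vdbl}) this is at most $C\psi_V(x,r) = C$. Lemma \ref{BasicShenLem} then gives, for small $t<1$,
\[
\psi_V(y,tr) \le C_V t^\alpha \psi_V(y,r) \le C C_V t^\alpha \le 1 .
\]
So $1/m_V(y) \ge t r$, which is the upper bound.

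For the lower bound $m_V(y) \ge c_1^{-1} m_V(x)$, note $B_r(x) \subset B_{3r}(y)$. Doubling gives $\psi_V(y,3r) \ge c\,\psi_V(x,r) = c$. Then for $R = Kr$ with $K$ large, Lemma \ref{BasicShenLem} gives
\[
\psi_V(y,R) \ge C_V^{-1}(K/3)^\alpha c > 1 .
\]
Since any $s \ge R$ also has $\psi_V(y,s) \ge C_V^{-1}(s/R)^\alpha \psi_V(y,R)$, this is still $>1$ once $K$ is enlarged. Thus $1/m_V(y) \le Kr$.

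\emph{Parts (b) and (c).} Let $D = \abs{x-y}$ and $r = 1/m_V(x)$. If $D \le 2r$, part (a) suffices, so assume $D > 2r$ and put $R = 2D$. Then $B_D(y) \subset B_{R}(x)$ and $B_R(x) \subset B_{3D}(y)$.

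For (b), doubling a fixed number of times gives $\psi_V(y,D) \le C\psi_V(x,R)$. To bound $\psi_V(x,R)$ from above I need polynomial growth of $\psi_V(x,\cdot)$ beyond scale $r$. The doubling property gives $\int_{B_R(x)} V \le C (R/r)^{\log_2\de}\int_{B_r(x)}V$, so
\[
\psi_V(x,R) \le C (R/r)^{k} \psi_V(x,r) = C(R/r)^k ,
\]
with $k = \log_2 \de + 2 - n$ (or $0$ if this is negative). With $s = D m_V(x)$, this gives $\psi_V(y,D) \le C s^k$. Applying Lemma \ref{BasicShenLem} at the point $y$ with $t = (C C_V s^k)^{-1/\alpha}$ yields $\psi_V(y,tD) \le 1$. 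Hence $1/m_V(y) \ge tD \gtrsim s^{1-k/\alpha} r$. If $k\le \alpha$ the conclusion is immediate from (a)-type reasoning; otherwise this is the power form of (b) with $k_0 = k/\alpha - 1$ after adjusting constants.

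For (c), use the reverse growth $\psi_V(y,3D) \ge c\,\psi_V(x,R) \ge c C_V^{-1}(R/r)^\alpha$. This is at least $c' s^\alpha$, which exceeds $1$, so $1/m_V(y) \le 3D$. That bound is weaker than the stated exponent $k_0/(k_0+1)$. To obtain the exponent, I would instead apply (b) with the roles of $x$ and $y$ swapped:
\[
m_V(x) \le c_2\bigl(1+D m_V(y)\bigr)^{k_0} m_V(y).
\]
Writing $a = D m_V(y)$ and $s = D m_V(x)$, this reads $s \le c_2 (1+a)^{k_0} a$. If $a \ge 1$, then $s \lesssim a^{k_0+1}$, so $a \gtrsim s^{1/(k_0+1)}$, and hence
\[
m_V(y) = a/D \gtrsim s^{1/(k_0+1)}/D = m_V(x)\, s^{-k_0/(k_0+1)} ,
\]
which is (c). If $a<1$, then $s \lesssim a$, so $a \gtrsim s$, and in the regime $s>2$ this is even better.

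The main obstacle is bookkeeping in (b): getting a polynomial rate and a consistent $k_0$ from doubling combined with Lemma \ref{BasicShenLem}. The step from (b) to (c) by symmetry is the key trick there.
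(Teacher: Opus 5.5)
Your proof is correct. The paper does not prove this lemma itself and simply cites Shen \cite{She95}; your argument is essentially the standard one there: (a) comes from doubling and Lemma~\ref{BasicShenLem}, (b) from the polynomial growth $\psi_V(x,R)\lesssim (R\,m_V(x))^{k}$ supplied by doubling combined with Lemma~\ref{BasicShenLem} at $y$, and (c) from (b) with $x$ and $y$ interchanged. Two cosmetic remarks: the bounds $C_V^{-1}(R/r)^{\alpha}\le\psi_V(x,R)\le C(R/r)^{k}$ force $k\ge\alpha$, so $k_0=k/\alpha-1\ge 0$ always (enlarging it to get $k_0>0$ costs nothing in (b) or (c)), and the lower bound in (a) needs only the inclusion $B_r(x)\subset B_{3r}(y)$, not doubling.
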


Using the auxiliary function, we now define the associated Agmon distance function.

\begin{defn}[Agmon distance]
\label{AgmonDistance}
Let $V \in RH_p$ for some $p \in (\frac n 2, \iny]$.
The \textbf{Agmon distance function} $d_V : \R^n \times \R^n \to \R_{\ge 0}$ is defined as
\begin{equation*}
d_V(x, y) = \inf_{\ga} \int_0^1 m_V(\ga(t)) |\ga^{\;\prime}(t)|\, dt ,
\end{equation*}
where the infimum ranges over all absolutely continuous $\ga:[0,1] \to \R^n$ with $\ga(0) = x$ and $\ga(1) = y$.
\end{defn}

As observed in \cite[eq. (3.19), (3.22)]{She99}, applications of Lemma \ref{mBounds} lead to the following bounds on the Agmon distance function.

\begin{lem}[Agmon distance bounds]
\label{ulDistance}
If $V \in RH_p$ for some $p \in (\frac n 2, \iny]$, then there exist constants $c_4, c_5 > 0$, depending on $n$, $p$, and $C_V$, so that for any $x, y \in \R^n$,
\begin{enumerate}
\item[(a)] If $\disp \abs{x - y} \le \frac{1}{m_V(x)}$, then $d_V(x, y) \le c_1$;
\item[(b)] If $\disp \abs{x - y} \ge \frac{1}{m_V(x)}$, then $\disp d_V(x, y) \le c_4 \pr{\abs{x - y} m_V(x)}^{k_0+1}$;
\item[(c)] If $\disp \abs{x - y} \ge \frac{1}{m_V(x)}$, then $\disp d_V(x, y) \ge c_5 \pr{\abs{y - x} m_V(x) }^{1/(k_0+1)}$.
\end{enumerate}
The constants $c_1$ and $k_0$ are from Lemma \ref{mBounds}.
\end{lem}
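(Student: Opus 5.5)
The three bounds will all come from integrating $m_V$ along paths and using Lemma \ref{mBounds} to control $m_V$ along them. Throughout, fix $x,y$ and set $t_0 = \abs{x-y}\, m_V(x)$.

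\emph{Part (a).} Take the straight segment $\ga(t) = x + t(y-x)$. Every point on it satisfies $\abs{\ga(t) - x} \le \abs{x-y} \le \frac{1}{m_V(x)} \le \frac{2}{m_V(x)}$, so Lemma \ref{mBounds}(a) gives $m_V(\ga(t)) \le c_1 m_V(x)$. Therefore
$$d_V(x,y) \le \int_0^1 c_1 m_V(x) \abs{x-y}\, dt \le c_1 .$$

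\emph{Part (b).} Again use the straight segment, but now bound $m_V$ on it with Lemma \ref{mBounds}(b). Along the segment $\abs{\ga(t) - x} \le \abs{x-y}$, so
$$m_V(\ga(t)) \le c_2 \pr{1+t_0}^{k_0} m_V(x).$$
This gives
$$d_V(x,y) \le c_2 (1+t_0)^{k_0} t_0 \le c_2 2^{k_0} t_0^{k_0+1},$$
where the last step uses $t_0 \ge 1$. So one can take $c_4 = 2^{k_0} c_2$.

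\emph{Part (c).} This lower bound is the main step, because it has to hold for every path $\ga$ from $x$ to $y$, not just one.

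1. Fix such a path. Since $\abs{y-x} \ge \frac{1}{m_V(x)}$, the path must leave the ball $B_{s}(x)$ for every $s \le \abs{x-y}$.
2. Let $\tau$ be the first time with $\abs{\ga(\tau)-x} = \abs{x-y}$. Restricting to $[0,\tau]$ and reparametrizing by $r = \abs{\ga(t)-x}$ only decreases the integral.
3. This yields the lower bound
$$\int_0^1 m_V(\ga)\abs{\ga'} \ge \int_0^{\abs{x-y}} \inf_{\abs{z-x}=r} m_V(z)\, dr ,$$
using that $\frac{d}{dt}\abs{\ga(t)-x} \le \abs{\ga'(t)}$.
4. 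By Lemma \ref{mBounds}(c), for $\abs{z-x} = r$,
$$m_V(z) \ge c_3 m_V(x) \pr{1 + r m_V(x)}^{-k_0/(k_0+1)}.$$
5. Substituting $s = r\, m_V(x)$ turns the lower bound into
$$c_3 \int_0^{t_0} (1+s)^{-k_0/(k_0+1)}\, ds = c_3 (k_0+1)\brac{(1+t_0)^{1/(k_0+1)} - 1}.$$
6. For $t_0 \ge 1$ this is at least $c\, t_0^{1/(k_0+1)}$, because $(1+t_0)^{\al} - 1 \ge (2^\al - 1)\, t_0^{\al}$ for $t_0 \ge 1$ and $\al = \frac{1}{k_0+1} \in (0,1)$. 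Take $c_5 = c_3 (k_0+1)(2^{\al}-1)$.
7. Take the infimum over all paths to conclude.

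The only delicate points are the reparametrization by the radial variable, which amounts to the coarea-type inequality for absolutely continuous curves, and the fact that $m_V$ is only controlled up to constants. Any measurability issue with the infimum over spheres can be avoided by using the pointwise bound from Lemma \ref{mBounds}(c) directly at $\ga(t)$, before changing variables.
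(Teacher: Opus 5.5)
Your proposal is correct and follows the paper's proof. Parts (a) and (b) use straight-line paths with Lemma \ref{mBounds}(a),(b), and part (c) uses the pointwise lower bound of Lemma \ref{mBounds}(c) along an arbitrary path, arriving at the same quantity $c_3(k_0+1)\brac{(1+t_0)^{1/(k_0+1)}-1}$. The paper instead picks a near-minimizing path and asserts that ``a computation shows'' the straight line minimizes the metric $\frac{dz}{(1+\abs{z})^{k_0/(k_0+1)}}$; you supply that computation explicitly via $\abs{\rho'}\le\abs{\ga'}$ for $\rho(t)=\abs{\ga(t)-x}$ (cleanest as the chain rule $\int F(\rho)\abs{\rho'}\ge\int F(\rho)\rho' = G(\abs{x-y})-G(0)$) and take the infimum at the end, which is a slightly tidier write-up of the same argument.
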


\begin{proof}
For $x, y \in \R^n$, define $\ga : \brac{0,1} \to \R^n$ to be the straight line path given by $\ga(t) = x + t\pr{y - x}$. \\
If $\abs{x - y} \le \frac{1}{m_V(x)}$, Lemma \ref{mBounds}(a) shows that $m_V(\ga(t)) \le c_1 m_V(x)$ for all $t \in \brac{0, 1}$ and it follows that
\begin{align*}
d_V(x, y)
&\le \int_0^1 m_V(\ga(t)) \abs{\ga^{\;\prime}(t)} dt
\le c_1 \int_0^1 m_V(x) \abs{x - y} dt
\le c_1,
\end{align*}
leading to (a).

If $\abs{x - y} \ge \frac{1}{m_V(x)}$, Lemma \ref{mBounds}(b) shows that $m_V(\ga(t)) \le c_2 \pr{1 + t \abs{x - y}m_V(x)}^{k_0} m_V(x)$ and then
\begin{align*}
d_V(x, y)
&\le c_2 \int_0^1 \pr{1 + t \abs{x - y}m_V(x)}^{k_0} m_V(x) \abs{x - y} dt
= c_2 \int_1^{1 + \abs{x - y}m_V(x)} z^{k_0}dz \\
&= \frac{c_2}{k_0 + 1} \brac{\pr{1 + \abs{x - y}m_V(x)}^{k_0+1} - 1}
\le \frac{c_2}{k_0 + 1} \pr{2 \abs{x - y}m_V(x)}^{k_0+1},
\end{align*}
and the conclusion (b) follows. 

Choose $\ga : \brac{0,1} \to \R^n$ so that $\ga\pr{0} = x$, $\ga\pr{1} = y$ and
\begin{align*}
d_V(x, y) \ge \frac 1 2 \int_0^1 m_V(\ga(t)) \abs{\ga^{\;\prime}(t)} dt.
\end{align*}
It follows from Lemma \ref{mBounds}(c) that
\begin{align*}
d_V(x, y)
\ge \frac {c_3} 2 \int_0^1 \frac{m_V(x)\abs{\ga^{\,\prime}(t)} dt}{\pr{1 + \abs{\ga(t) - x} m_V(x)}^{k_0/(k_0+1)}}
= \frac {c_3} 2 \int_0^1 \frac{\abs{\widetilde \ga \,'(t)} dt}{\pr{1 + \abs{\widetilde \ga(t)}}^{k_0/(k_0+1)}},
\end{align*}
where $\widetilde \ga : \brac{0,1} \to \R^n$ is defined as $\tilde \ga(t) = m_V(x) \pr{\ga(t) - x}$, a shifted, rescaled version of $\ga$ with $\widetilde \ga(0) = 0$ and $\widetilde \ga(1) = m_V(x) \pr{y - x}$.
This integral is bounded from below by the geodesic distance from $0$ to $m_V(x) \pr{y - x}$ in the metric
$$\frac{dz}{\pr{1 + \abs{z}}^{k_0/(k_0+1)}}.$$
A computation shows that the straight line path achieves this minimum and then
\begin{align*}
d_V(x, y)
&\ge \frac {c_3} 2 \int_0^1 \frac{m_V(x) \abs{y - x} dt}{\pr{1 + m_V(x) t \abs{y - x}}^{k_0/(k_0+1)}}
= \frac {c_3 (k_0+1)} 2 \brac{ \pr{1+m_V(x) \abs{y - x}}^{1/(k_0+1)} - 1}.
\end{align*}
Since $\abs{x - y} \ge \frac 1 {m_V(x)}$, then a mean value argument leads to the conclusion (c).
\end{proof}

In future sections, the Agmon distance function will be an important tool for us once it has been suitably regularized as is done in \cite{She99}.
Observe that by Theorem \ref{mBounds}(c), $m_V$ is a slowly varying function in the sense of \cite[Definition 1.4.7]{Hor83}.
As such, we have the following result.

\begin{lem}[cf. the proof of Lemma 3.3. in \cite{She99}]
\label{partofU}
Let $V \in RH_p$ for some $p \in (\frac n 2, \iny]$.
There exist sequences $\set{x_j}_{j=1}^\iny \su \R^n$ and $\set{\phi_j}_{j=1}^\iny \su C^\iny_0(\R^n)$, and constants $c_6, c_7 > 0$, depending on $n$, $p$, and $C_V$, so that
\begin{itemize}
\item[(a)] $\disp \R^n = \bigcup_{j=1}^\iny B_j$, where $r_j := m_V(x_j)^{-1}$, $\disp B_j := B_{r_j}(x_j)$;
\item[(b)] $\phi_j \in C^\iny_0\pr{B_j}$, $0 \le \phi_j \le 1$, and $\disp \sum_{j=1}^\iny \phi_j = 1$;
\item[(c)] $\abs{\gr \phi_j\pr{x}} \le c_6 \, m_V(x)$;
\item[(d)] $\disp \sum_{j=1}^\iny \chi_{B_j} \le c_7.$
\end{itemize}
\end{lem}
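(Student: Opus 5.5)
The plan is the standard slowly-varying partition of unity construction (H\"ormander, Theorem 1.4.10), with Lemma \ref{mBounds}(a) playing the role of the slow-variation hypothesis. Write $\rho(x) = m_V(x)^{-1}$.

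\textbf{Step 1: choosing the centers.} Select a maximal family $\set{x_j}$ such that the balls $B_{\rho(x_j)/(4c_1)}(x_j)$ are pairwise disjoint. Such a family can be chosen countable by Zorn's lemma, or by an exhaustion of dyadic annuli using local boundedness of $m_V$, which follows from Lemma \ref{mBounds}(b).

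\textbf{Step 2: covering, property (a).} Suppose some $y$ lies in no $B_{r_j}(x_j)$. Then the ball $B_{\rho(y)/(4c_1)}(y)$ meets some $B_{\rho(x_j)/(4c_1)}(x_j)$, by maximality. Suppose first that $\rho(x_j) \ge \rho(y)$. Then $\abs{y - x_j} \le \rho(x_j)/(2c_1) < r_j$, so $y \in B_j$, a contradiction. Otherwise $\abs{y - x_j} < \rho(y)/(2c_1) \le 2\rho(y)$. Lemma \ref{mBounds}(a), applied at $y$, then gives $\rho(x_j) \ge \rho(y)/c_1$, which again forces $\abs{y - x_j} \le \rho(x_j)/2$, so $y \in B_j$. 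I expect some bookkeeping with the constants $c_1$ here; I may need to enlarge the disjoint radii by a factor, or to use the balls $B_{r_j/2}$ for the cover. The aim is to have the slightly smaller balls $B_{r_j/2}(x_j)$ still cover $\R^n$.

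\textbf{Step 3: bounded overlap, property (d).} Fix $x$ and suppose $x \in B_j$. Lemma \ref{mBounds}(a) gives $c_1^{-1}\rho(x) \le r_j \le c_1 \rho(x)$. The disjoint balls $B_{r_j/(4c_1)}(x_j)$ then all lie in $B_{2c_1\rho(x)}(x)$ and each has radius at least $\rho(x)/(4c_1^2)$. A volume count bounds their number by $c_7 = (8c_1^3)^n$.

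\textbf{Step 4: the partition of unity, properties (b) and (c).} Fix $\eta \in C_c^\iny(B_1)$ with $0 \le \eta \le 1$ and $\eta \equiv 1$ on $B_{1/2}$. Set $\eta_j(x) = \eta((x - x_j)/r_j)$. Then $\Phi = \sum_k \eta_k \ge 1$ everywhere, because of the cover by half-balls from Step 2, and $\Phi \le c_7$ by Step 3. Define $\phi_j = \eta_j/\Phi$. Then $0 \le \phi_j \le 1$, $\supp \phi_j \su B_j$, $\sum_j \phi_j = 1$, and each $\phi_j$ is smooth, since the sum defining $\Phi$ is locally finite.

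For the gradient, $\abs{\gr \eta_k(x)} \le \norm{\gr\eta}_\iny r_k^{-1} \le c_1 \norm{\gr \eta}_\iny m_V(x)$ whenever $x \in B_k$. Differentiating the quotient gives
\[
\abs{\gr\phi_j} \le \abs{\gr\eta_j} + \eta_j \sum_k \abs{\gr\eta_k} \le (1 + c_7)\, c_1 \norm{\gr\eta}_\iny m_V(x),
\]
which is (c).

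\textbf{Main obstacle.} The only delicate point is Step 2: the cover must use radii small enough to fall inside the regime $\abs{x - y} \le 2/m_V(x)$ where Lemma \ref{mBounds}(a) applies, and the constants must be tuned so that the half-balls still cover.
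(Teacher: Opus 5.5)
Your proposal is correct and follows essentially the paper's route. The paper gives no separate proof; it notes that $m_V^{-1}$ is slowly varying (this is Lemma \ref{mBounds}(a)) and invokes H\"ormander's Theorem 1.4.10, whose proof is exactly your maximal disjoint family, volume count, and normalized-bump construction. Your Step 2 already gives $|y-x_j|<r_j/2$ in both cases, so the "bookkeeping" you anticipated is unnecessary. The one unproved assertion, local finiteness of $\sum_k \eta_k$, follows from the same volume count as Step 3: if $B_k$ meets $B_{\rho(x)/2}(x)$, two applications of Lemma \ref{mBounds}(a) give $c_1^{-2}\rho(x)\le r_k\le c_1^{2}\rho(x)$, so only boundedly many such $k$ exist.
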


The previous lemma and \cite[Theorem 1.4.10]{Hor83} were used in \cite[pp. 542]{She99} to establish the following result.
For completeness, we include the proof below.

\begin{lem}[Lemma 3.3 in \cite{She99}]
\label{RegLem0}
Let $V \in RH_p$ for some $p \in (\frac n 2, \iny]$.
There exists a constant $c_8(n, p, C_V) >  0$ so that for every $y \in \R^n$, there exists a nonnegative function $g_V(\cdot, y) \in C^\infty(\R^n)$ such that for every $x \in \R^n$,
$$\abs{g_V(x, y) - d_V(x, y)} \le c_1$$
and
$$|\gr_x \, g_V(x, y)| \le c_8 \, m_V(x).$$
The constant $c_1$ is from Lemma \ref{mBounds}.
\end{lem}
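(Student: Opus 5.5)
We prove Lemma \ref{RegLem0} by mollifying $d_V(\cdot, y)$ at the local scale $m_V(x)^{-1}$, using the partition of unity of Lemma \ref{partofU}. Fix $y \in \R^n$ and write $d(x) := d_V(x, y)$. Choose the data $\set{x_j}$, $\set{\phi_j}$, $r_j = m_V(x_j)^{-1}$, $B_j = B_{r_j}(x_j)$ from Lemma \ref{partofU}, and define
\begin{equation*}
g_V(x, y) := \sum_{j=1}^\iny d(x_j) \, \phi_j(x).
\end{equation*}
This is a locally finite sum of smooth functions by Lemma \ref{partofU}(d), so $g_V(\cdot, y) \in C^\iny(\R^n)$. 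It is nonnegative because $d \ge 0$ and $\phi_j \ge 0$.

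\textbf{Step 1: $g_V$ is close to $d_V$.} We use the triangle inequality for $d_V$, which is immediate from its definition as an infimum over paths, together with Lemma \ref{ulDistance}(a). If $\phi_j(x) \ne 0$, then $x \in B_j$, so $\abs{x - x_j} < m_V(x_j)^{-1}$. Lemma \ref{ulDistance}(a), applied with base point $x_j$, then gives
\begin{equation*}
\abs{d(x) - d(x_j)} \le d_V(x, x_j) \le c_1 .
\end{equation*}
Since $\sum_j \phi_j = 1$, we conclude
\begin{equation*}
\abs{g_V(x, y) - d(x)} \le \sum_j \phi_j(x) \abs{d(x_j) - d(x)} \le c_1 .
\end{equation*}

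\textbf{Step 2: the gradient bound.} Differentiating $\sum_j \phi_j \equiv 1$ gives $\sum_j \gr \phi_j \equiv 0$, so we may subtract $d(x)$ inside the sum:
\begin{equation*}
\gr_x g_V(x, y) = \sum_j \pr{d(x_j) - d(x)} \gr \phi_j(x).
\end{equation*}
Only indices with $x \in B_j$ contribute, and there are at most $c_7$ of them by Lemma \ref{partofU}(d). For each such index, Step 1 gives $\abs{d(x_j) - d(x)} \le c_1$, and Lemma \ref{partofU}(c) gives $\abs{\gr \phi_j(x)} \le c_6 \, m_V(x)$. Therefore
\begin{equation*}
\abs{\gr_x g_V(x, y)} \le c_1 c_6 c_7 \, m_V(x),
\end{equation*}
so we may take $c_8 = c_1 c_6 c_7$.

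\textbf{Main obstacle.} The substantive input is the existence of the partition of unity in Lemma \ref{partofU} adapted to the slowly varying function $m_V$, which we are allowed to assume. The remaining point to check is that Lemma \ref{ulDistance}(a) is applied with the correct base point. The ball $B_j$ is defined by the condition $\abs{x - x_j} \le m_V(x_j)^{-1}$, which is exactly the hypothesis of Lemma \ref{ulDistance}(a) with $x_j$ as the base point, so no appeal to Lemma \ref{mBounds}(a) is needed. If one preferred to use $m_V(x)$ instead, Lemma \ref{mBounds}(a) would convert between the two at the cost of enlarging constants, but that is not required here.
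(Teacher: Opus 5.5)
Your proposal is correct and follows the paper's proof essentially line by line. It uses the same definition $g_V(x,y)=\sum_j d_V(x_j,y)\,\phi_j(x)$, the same triangle inequality with Lemma \ref{ulDistance}(a) at base point $x_j$, the same identity $\sum_j \gr\phi_j = 0$, and arrives at the same constant $c_8 = c_1 c_6 c_7$. One small refinement: bounded overlap in Lemma \ref{partofU}(d) does not by itself make the sum locally finite. You also need that the balls $B_j$ meeting a small neighborhood of $x$ have radii comparable to $m_V(x)^{-1}$, which follows from Lemma \ref{mBounds}(a).
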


\begin{proof}
Let $\set{x_j}_{j=1}^\iny$, $\set{B_j}_{j = 1}^\iny$, and $\set{\phi_j}_{j=1}^\iny$ be from Lemma \ref{partofU}.
For each $y \in \R^n$, define 
$$g_V(x, y) = \sum_j d_V(x_j, y) \phi_j(x)= \sum_{j: x \in B_j} d_V(x_j, y) \phi_j(x).$$
Since $\disp \sum_j \phi_j = 1$, then
\begin{align*}
g_V(x, y) - d_V(x, y)
&= \sum_j \brac{d_V(x_j, y) - d_V(x, y)} \phi_j(x) 
\end{align*}
and
\begin{align*}
\abs{g_V(x, y) - d_V(x, y)}
&\le \sum_{j: x \in B_j} \abs{ d_V(x_j, y) - d_V(x, y) } \phi_j(x) 
\le \sum_{j: x \in B_j} d_V(x_j, x) \phi_j(x) 
\le \sum_{j} c_1 \phi_j(x) 
= c_1,
\end{align*}
where we have applied the triangle inequality, Lemma \ref{ulDistance}(a), then Lemma \ref{partofU}(b).

For the second part, we differentiate to get
\begin{align*}
\gr_x g_V(x, y)
&= \sum_{j} d_V(x_j, y) \gr \phi_j(x) 
= \sum_{j} \brac{d_V(x_j, y) - d_V(x, y)} \gr \phi_j(x) +  d_V(x, y) \sum_{j} \gr \phi_j(x) \\
&= \sum_{j} \brac{d_V(x_j, y) - d_V(x, y)} \gr \phi_j(x),
\end{align*}
since $\disp 0 = \gr 1 = \sum_j \gr \phi_j$.
Taking norms and using the triangle inequality then shows that
\begin{align*}
\abs{\gr_x g_V(x, y)}
&\le \sum_{j: x \in B_j} \abs{d_V(x_j, y) - d_V(x, y)} \abs{\gr \phi_j(x) }
\le \sum_{j: x \in B_j} d_V(x_j, x) \abs{\gr \phi_j(x) }
\le \sum_{j: x \in B_j} c_1 c_6 m_V(x) \\
&\le c_1 c_6 c_7 m_V(x),
\end{align*}
where we have again used Lemma \ref{ulDistance}(a) followed by Lemma \ref{partofU}(c),(d).
\end{proof}

\begin{rem}
\label{constantVRem}
To conclude this section, we consider these concepts when applied to the constant function $V_0(x) \equiv V_0 > 0$, which belongs to $RH_p$ for each $p > 1$ with $C_{V_0,p} = 1$.
Since $\psi_{V_0}(x, r) = V_0 r^2$ is constant in $x$, then $m_{V_0}(x) = \sqrt{V_0}$ is constant and we see that Lemma \ref{mBounds} holds with $c_1, c_2, c_3 = 1$ and $k_0 = 0$.
Moreover, the Agmon distance $d_{V_0}(x, y) = \sqrt{V_0} \abs{x - y}$ is a rescaling of the Euclidean distance and Lemma \ref{ulDistance} holds with $c_4, c_5 = 1$.
If we define $g_{V_0}(x, y) = \sqrt{V_0 \abs{x - y}^2 + 1}$, then $\abs{g_{V_0}(x, y) - d_{V_0}(x, y)} \le 1$ and
\begin{align*}
\abs{\gr_x g_{V_0}(x,y)} &= \frac{\sqrt{V_0} \abs{x - y}}{\sqrt{\abs{x - y}^2 + V_0^{-1}}} \le \sqrt{V_0} = m_{V_0}(x),
\end{align*}
showing that Lemma \ref{RegLem0} also holds with $c_8 = 1$.
With this notation, the inequality in Lemma \ref{MainUpperBoundLem0} now reads
\begin{align*}
\int_{\R^n} \abs{m_{V_0} \, u \, \phi }^2 e^{- 2 \ga g_{V_0}(\cdot,0) }
&\le C_\ga \int_{\R^n} \abs{u}^2 \abs{\gr \phi}^2 e^{-2 \ga g_{V_0}(\cdot,0)}.
\end{align*}
\end{rem}

\section{Fefferman-Phong Inequalities and Applications}
\label{FPI}

In this section, we present and prove a Fefferman-Phong inequality, then we use it establish our main tool, Lemma \ref{MainUpperBoundLem}.
In some sense, Lemma \ref{MainUpperBoundLem} serves as a replacement for a Carleman estimate in the proof of our main theorem.
The following Poincar\'e inequality will be used to prove the Fefferman-Phong inequality below.

\begin{lem}[Poincar\'e inequality, Lemma 0.14 in \cite{She99}]
\label{PoincareIneqThm}
If $V \in RH_{\frac{n}{2}}$, then there exists $c_P(n, C_V) > 0$ so that for any open ball $B \su \R^n$ of radius $r > 0$ and any $u \in C^1(B)$, it holds that
\begin{equation*}
{\int_B \int_B V(B)^{-1} V(y) \abs{u(x) - u(y)}^2 \, dx \, dy}
\le c_P r^2 {\int_B \abs{ \gr u(x)}^2 \, dx},
\end{equation*}
where $\disp V(B) := \int_B V(x) dx$.
\end{lem}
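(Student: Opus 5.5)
The plan is to compare both $u(x)$ and $u(y)$ with the mean $u_B := \fint_B u$. We then bound the two resulting pieces separately: a classical Poincar\'e inequality handles the first, and Sobolev--Poincar\'e combined with the reverse H\"older condition handles the second. We may assume $\int_B \abs{\gr u}^2 < \iny$, since otherwise there is nothing to prove. Using $\abs{u(x) - u(y)}^2 \le 2\abs{u(x) - u_B}^2 + 2 \abs{u(y) - u_B}^2$, the left-hand side is bounded by
\begin{equation*}
2 \int_B \abs{u(x) - u_B}^2 dx + 2 \abs{B} \, V(B)^{-1} \int_B V(y) \abs{u(y) - u_B}^2 dy =: 2 I + 2 II .
\end{equation*}
In $I$ the $y$-integral of $V(y)$ has cancelled against $V(B)^{-1}$. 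The classical Poincar\'e inequality on balls then gives $I \le C(n) r^2 \int_B \abs{\gr u}^2$.

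For $II$, when $n \ge 3$ we apply H\"older with exponents $\frac n 2$ and $\frac{n}{n-2}$:
\begin{equation*}
\int_B V \abs{u - u_B}^2 \le \pr{\int_B V^{\frac n 2}}^{\frac 2 n} \pr{\int_B \abs{u - u_B}^{\frac{2n}{n-2}}}^{\frac{n-2}{n}} .
\end{equation*}
The second factor is at most $C(n) \int_B \abs{\gr u}^2$ by the Sobolev--Poincar\'e inequality on balls, which is scale invariant at the critical exponent. For the first factor, the $RH_{n/2}$ condition gives
\begin{equation*}
\pr{\int_B V^{n/2}}^{2/n} = \abs{B}^{2/n} \pr{\fint_B V^{n/2}}^{2/n} \le C_V \abs{B}^{2/n} \fint_B V = C_V \abs{B}^{\frac 2 n - 1} V(B).
\end{equation*}
Multiplying by $\abs{B} V(B)^{-1}$, the factor $V(B)$ cancels and we are left with $C_V \abs{B}^{2/n} \simeq r^2$. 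Hence $II \le C(n, C_V) r^2 \int_B \abs{\gr u}^2$, which completes the argument for $n \ge 3$.

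The main obstacle is the low-dimensional and endpoint bookkeeping, namely $n \le 2$, where $\frac n2 \le 1$ and the critical Sobolev embedding is unavailable.

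For $n = 2$, I would first invoke Gehring's Lemma \ref{GehringLemma}, or the standing reduction that lets us take $p > \frac n 2$, to obtain $V \in RH_q$ for some $q > 1$. We then use H\"older with exponents $q$ and $q'$, together with the Sobolev--Poincar\'e bound
\begin{equation*}
\pr{\fint_B \abs{u - u_B}^{2q'}}^{1/q'} \le C r^2 \fint_B \abs{\gr u}^2 ,
\end{equation*}
which is valid for every finite exponent in dimension two. Tracking powers of $\abs{B}$ exactly as above, $V(B)$ again cancels and the factor $r^2$ emerges.

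For $n = 1$, I would instead use the pointwise bound $\abs{u(y) - u_B}^2 \le 2r \int_B \abs{u'}^2$. Then $II \le \abs{B} V(B)^{-1} V(B) \cdot 2r \int_B \abs{u'}^2 \lesssim r^2 \int_B \abs{u'}^2$.

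In every case the constant depends only on $n$ and $C_V$, with the Gehring exponent itself depending only on $C_V$.
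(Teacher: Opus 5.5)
\textbf{Verdict.} Your argument is correct for $n\ge3$ and for $n=1$, but the $n=2$ case has a genuine gap. The paper gives no proof of this lemma; it quotes it from \cite{She99}, whose setting is $n\ge 3$. So I am assessing your argument on its own terms rather than against a proof in the paper.

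\textbf{The cases $n\ge3$ and $n=1$.} For $n\ge 3$ your proof is correct and is the standard argument. You split through $u_B$ and use the classical Poincar\'e inequality for $I$. For $II$ you use H\"older with exponents $\frac n2$ and $\frac n{n-2}$, the scale-invariant Sobolev--Poincar\'e inequality, and $RH_{n/2}$. This gives $c_P=C(n)(1+C_V)$ with no need for Gehring. Your $n=1$ argument is also fine, and it uses nothing about $V$.

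One small omission: $u_B$ requires $u\in L^1(B)$, which does not follow from $u\in C^1(B)$ alone. There are two easy fixes:
\begin{itemize}
\item note that $\gr u\in L^2(B)$ forces $u\in L^2(B)$ on a ball; or
\item prove the estimate on $B_s$ for $s<r$, where $u\in C^1(\overline{B_s})$, and let $s\uparrow r$, using monotone convergence for both $V(B_s)$ and the double integral. The $RH$ constant is uniform over balls, so $c_P$ does not change.
\end{itemize}

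\textbf{The gap at $n=2$.} Here $RH_{n/2}=RH_1$. The condition $\fint_B V\le C_V\fint_B V$ holds with $C_V=1$ for every a.e.\ positive $V\in L^1_{\loc}$, so the hypothesis carries no information. Gehring's lemma self-improves $RH_p$ only for $p>1$, so it cannot produce $V\in RH_q$ with $q>1$.

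In fact the inequality is false in $\R^2$ under $RH_1$ alone. Take $B=B_1(0)$ and the following construction.
\begin{itemize}
\item The function $w=\log\log(e/\abs{x})$ satisfies $\int_B\abs{\gr w}^2=2\pi$.
\item Truncate it smoothly at height $T=\log\log(e/\de)$, i.e.\ set $u_\de=F(w)$ with $F$ smooth, $0\le F'\le 1$, $F(t)=t$ for $t\le T-1$, and $F$ constant for $t\ge T$.
\item Then $u_\de\in C^\iny(B)$, $\int_B\abs{\gr u_\de}^2\le 2\pi$, $u_\de\ge T-1$ on $B_\de$, and $u_\de\le 1$ off $B_{1/2}$.
\item Let $V_\de=\chi_{B_\de}+\abs{B_\de}/\abs{B}$. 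This lies in $RH_1$ with constant $1$, and $V_\de(B)=2\abs{B_\de}$.
\end{itemize}
Restricting the double integral to $y\in B_\de$, $x\in B\setminus B_{1/2}$ shows the left-hand side is at least $\frac12\abs{B\setminus B_{1/2}}(T-2)^2$. This tends to $\iny$ as $\de\to0$, while the right-hand side stays bounded.

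\textbf{How to repair it.} For $n=2$ you must \emph{assume} $V\in RH_q$ for some $q>1$ rather than derive it. That hypothesis is exactly what is available where the paper applies the lemma, namely Proposition \ref{FPml}, which has $p>\frac n2=1$. Under it, your H\"older/Sobolev argument goes through, with $c_P$ depending on $q$ and $C_{V,q}$.

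The paper's remark that Gehring's lemma lets one take $p>\frac n2$ has the same defect when $n=2$ and $p=1$. Your appeal to that ``standing reduction'' inherits it.
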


Next we present the Fefferman-Phong inequality and its proof.
Originally due to Fefferman and Phong \cite{Fef83}, a slight modification of the following version of the result can be found in \cite{She94}.
This estimate will be applied below to prove our main lemma.

\begin{prop}[Fefferman-Phong Inequality, Lemma 1.11 in \cite{She94}]
\label{FPml}
If $V \in RH_p$ for some $p \in (\frac n 2, \iny]$, then there exists $c_F(n, \la, p, C_V) > 0$ so for any $u \in C^1_0(\R^n)$, it holds that
$$\int_{\R^n} \abs{m_V(x) u(x)}^2  \, dx
\le c_F \pr{\frac \la 2 \inrn \abs{\gr u(x)}^2 \, dx + \inrn \abs{V^\frac12 (x) u(x)}^2 \, dx}.$$
\end{prop}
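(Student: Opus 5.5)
We prove the inequality locally and then sum the local estimates with the partition of unity from Lemma \ref{partofU}. The local statement we aim for is the following. There is a constant $C(n,p,C_V)$ such that for every ball $B = B_r(x_0)$ with $r = m_V(x_0)^{-1}$ (so $\psi_V(x_0, r) = 1$), and every $u \in C^1$ on the doubled ball $2B$,
\begin{equation*}
\int_{B} \abs{m_V u}^2 \le C \int_{2B} \pr{\abs{\gr u}^2 + V \abs{u}^2} .
\end{equation*}
Since $\la \le 1$, the factor $\frac \la 2$ only enlarges $c_F$ by $2/\la$, which explains the dependence on $\la$.

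For the local estimate, first note that by continuity of $r \mapsto \psi_V(x_0,r)$ and the definition \eqref{mDef}, $\psi_V(x_0, r) = 1$ at $r = m_V(x_0)^{-1}$. By doubling (Lemma \ref{Vdbl}) we then get $r^2 V(2B)/\abs{2B} \simeq 1$. By Lemma \ref{mBounds}(a), $m_V \simeq m_V(x_0) = r^{-1}$ on $2B$. It therefore suffices to show
\begin{equation*}
r^{-2}\int_{2B}\abs u^2 \lesssim \int_{2B}\abs{\gr u}^2 + \int_{2B} V\abs u^2 .
\end{equation*}
Apply Lemma \ref{PoincareIneqThm} on $2B$; it applies because $V \in RH_p \su RH_{n/2}$ by Lemma \ref{GehringLemma}. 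Then use $\abs{u(x)}^2 \le 2\abs{u(x)-u(y)}^2 + 2\abs{u(y)}^2$, multiply by $V(y)/V(2B)$, and integrate in $x$ and $y$ over $2B$. This gives
\begin{equation*}
\int_{2B} \abs u^2 \le 2 c_P (2r)^2 \int_{2B}\abs{\gr u}^2 + 2\frac{\abs{2B}}{V(2B)} \int_{2B} V \abs u^2 .
\end{equation*}
Since $\abs{2B}/V(2B) \simeq r^2$, dividing by $r^2$ yields the local estimate.

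To globalize, take $\{x_j\}$ and $B_j$ from Lemma \ref{partofU}. Then $\int \abs{m_V u}^2 \le \sum_j \int_{B_j}\abs{m_V u}^2$, and each summand is bounded by $C\int_{2B_j}(\abs{\gr u}^2 + V\abs u^2)$. The main obstacle is finite overlap of the doubled balls $2B_j$, since Lemma \ref{partofU}(d) only controls the overlap of the $B_j$ themselves. I would handle this in one of two ways. The first is to note that the construction behind Lemma \ref{partofU} gives bounded overlap for any fixed dilate; this follows from Lemma \ref{mBounds}(a), because balls meeting a point $x$ have comparable radii $\simeq m_V(x)^{-1}$, combined with a volume-counting argument. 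The second, cleaner option is to run the local argument on $B_j$ itself rather than on $2B_j$. There $\psi_V(x_j, r_j) = 1$ holds exactly, so $V(B_j)/\abs{B_j} = r_j^{-2}$ and no doubling is needed. Lemma \ref{mBounds}(a) still gives $m_V \le c_1 r_j^{-1}$ on $B_j$. With this choice the sum is controlled directly by Lemma \ref{partofU}(d):
\begin{equation*}
\int \abs{m_V u}^2 \le C c_7 \int \pr{\abs{\gr u}^2 + V \abs u^2}.
\end{equation*}
Adjusting constants for $\la$ finishes the proof.
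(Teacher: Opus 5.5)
Your proposal is correct, and your ``second, cleaner option'' is exactly the paper's argument: apply Lemma \ref{PoincareIneqThm} on $B_j = B_{r_j}(x_j)$ with $r_j = m_V(x_j)^{-1}$, use the identity $r_j^2 V(B_j)/\abs{B_j} = \psi_V(x_j, r_j) = 1$, compare $m_V$ with $m_V(x_j)$ on $B_j$ via Lemma \ref{mBounds}(a), and sum using the bounded overlap in Lemma \ref{partofU}(d). The only differences are cosmetic: the paper splits $\abs{u(x)}^2$ with weights $1 + \frac{\la}{2 c_P}$ and $1 + \frac{2 c_P}{\la}$ to produce the factor $\frac{\la}{2}$ directly, whereas you split with factor $2$ and absorb $2/\la$ afterwards using $\la \le 1$, and the doubled-ball detour is unnecessary.
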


\begin{proof}
For $x_0 \in \R^n$, let $r_0 = m_V(x_0)^{-1}$ and set $B = B_{r_0}(x_0)$.
With $c_P$ from Lemma \ref{PoincareIneqThm}, we have
\begin{align*}
\int_{B} \abs{u(x)}^2  \, dx
&= \int_{B} \int_{B} V(B)^{-1} V(y) \abs{u(x)}^2 dy dx  \\
&\le \pr{1 +  \frac {\la} {2 c_P}} \int_{B} \int_{B} V(B)^{-1} V(y) \abs{u(x) - u(y)}^2 dy dx \\
&+  \pr{1 +  \frac {2 c_P}{\la} } \int_{B} \int_{B} V(B)^{-1} V(y) \abs{u(y)}^2 dy dx  \\
&\le \pr{1 + \frac {\la} {2 c_P}} c_P r_0^{2} \int_{B} \abs{\gr u(x)}^2 \, dx
+ \pr{1 + \frac {2 c_P}{\la} } \abs{B} \, V(B)^{-1} \int_{B} V(y) \abs{u(y)}^2 dy,
\end{align*}
where the last line follows from an application of Lemma \ref{PoincareIneqThm}.
Now we multiply this inequality through by $r_0^{-2} = m_V(x_0)^{2}$, then apply Lemma \ref{mBounds}(a) to conclude that 
\begin{align*}
\int_{B} \abs{m_V(x) u(x)}^2  \, dx
&\le c_1 \pr{1 + \frac {2 c_P} {\la}}  \pr{ \frac {\la} {2} \int_{B} \abs{\gr u(x)}^2 \, dx 
+  \int_{B} V(x) \abs{u(x)}^2 dx},
\end{align*}
since $r_0^{2} \abs{B}^{-1} V(B) = \psi_V(x_0, r_0) =1$. 

According to Lemma \ref{partofU}, there exists a sequence $\set{x_j}_{j=1}^\iny \su \R^n$ such that if we define $\disp B_j = B_{r_j}(x_j)$, where $r_j = m_V(x_j)^{-1}$, then $\disp \R^n = \bigcup_{j=1}^\iny B_j$ and $\disp \sum_{j=1}^\iny \chi_{B_j} \le c_7.$
Therefore, 
\begin{align*}
\int_{\R^n}\abs{m_V u}^2 
&\le \sum_{j=1}^\iny \int_{B_j} \abs{m_V u}^2 
\le \sum_{j=1}^\iny c_1 \pr{1 + \frac {2 c_P}{\la}}  \pr{\frac \la 2 \int_{B_j} \abs{\gr u}^2 +  \int_{B_j} V \abs{u}^2} \\
&\le c_1 c_7 \pr{1 + \frac {2 c_P}{\la} }  \pr{\frac \la 2 \int_{\R^n} \abs{\gr u}^2 + \int_{\R^n} V \abs{u}^2},
\end{align*}
as required.
\end{proof}

With Proposition \ref{FPml}, we now prove our main lemma.
Both the statement and the proof of this next result are inspired by \cite[Proposition 6.5]{MP19} which was used to establish upper bounds and exponential decay estimates for fundamental solutions.

\begin{lem}[Main Lemma]
\label{MainUpperBoundLem}
Let $A : \R^n \to \mathbb{M}_{n}$ be bounded and elliptic in the sense of \eqref{Abound} and \eqref{ellip} and let $V \in RH_p$ for some $p \in \pb{\frac n 2, \iny}$.
Assume that $u : \R^n \to \R$ is a weak solution to \eqref{ePDE} in the sense of \eqref{weakSol}.
Let $g \in C^1(\R^n)$ be a function for which $|\nabla g(x)| \le c_g m_V(x)$.
There exists $\ga_0(n, \la, \La, p, C_V, c_g) > 0$ such that whenever $\phi \in C_c ^\infty (\R^n)$ and $\ga < \ga_0$, it holds that
\begin{equation*}
\inrn \abs{m_V u \phi}^2  e^{-2 \ga g} \, \le c_L \inrn |u|^2 |\nabla \phi|^2 e^{-2\ga g},
\end{equation*}
where  $c_L(n, \la, \La, p, C_V, \ga_0, \ga) > 0$. 
\end{lem}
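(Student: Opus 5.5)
The plan is to mimic the proof of Lemma \ref{MainUpperBoundLem0}, replacing the trivial inequality $V_0\int h^2 \le \int |\gr h|^2 + V_0 h^2$ by the Fefferman--Phong inequality, Proposition \ref{FPml}. Set $f = \phi e^{-\ga g}$, $\vp = u f^2$ and $h = u f$. Since $\phi$ has compact support and $g \in C^1$, both $\vp$ and $h$ belong to $H^1_0(\R^n)$ with compact support, so $\vp$ is an admissible test function in \eqref{weakSol}. Proposition \ref{FPml} is stated for $C^1_0$ functions; by density (approximating $h$ in $H^1$ and using that $m_V$ is locally bounded and $V \in L^1_{\loc}$) it extends to compactly supported $h \in H^1_0$. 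I would apply it to $h$.

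The key algebraic identity is the variable-coefficient analogue of the one in Lemma \ref{MainUpperBoundLem0}. We have
\[
A\gr h\cdot\gr h = A\gr u\cdot\gr\vp + u^2 A\gr f\cdot\gr f + uf\pr{A\gr f\cdot\gr u - A\gr u\cdot\gr f}.
\]
The last term vanishes when $A$ is symmetric but not in general. To handle it, I would instead work directly with ellipticity. Write $\la\int|\gr h|^2 \le \int A\gr h\cdot\gr h$ and expand $A\gr h\cdot\gr h = f^2A\gr u\cdot\gr u + uf(A\gr u\cdot\gr f + A\gr f\cdot\gr u) + u^2A\gr f\cdot\gr f$. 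Using the equation, $\int f^2 A\gr u\cdot\gr u + Vu^2f^2 = -2\int uf\,A\gr u\cdot\gr f$. Hence
\[
\int A\gr h\cdot \gr h + V h^2 = \int u^2 A\gr f\cdot\gr f + \int uf\pr{A\gr f\cdot\gr u - A\gr u\cdot \gr f}.
\]
The antisymmetric cross term is bounded by $2\La\int |uf||\gr f||\gr u|$. Writing $f\gr u = \gr h - u\gr f$ turns it into $2\La\int |u||\gr f|(|\gr h| + |u||\gr f|)$, and Young's inequality absorbs $\frac{\la}{2}\int|\gr h|^2$ into the left side. This leaves
\[
\frac\la2\int|\gr h|^2 + \int Vh^2 \le C(\la,\La)\int u^2|\gr f|^2 .
\]
Combining with Proposition \ref{FPml} gives $\int m_V^2 u^2\phi^2e^{-2\ga g} \le c_F C(\la,\La)\int u^2|\gr f|^2$.

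Next I expand $\gr f = (\gr\phi - \ga\phi\gr g)e^{-\ga g}$ and use $|\gr g|\le c_g m_V$, which gives
\[
|\gr f|^2 \le 2\pr{|\gr\phi|^2 + \ga^2c_g^2m_V^2\phi^2}e^{-2\ga g}.
\]
Thus
\[
\int|m_Vu\phi|^2e^{-2\ga g} \le 2c_FC\int u^2|\gr\phi|^2e^{-2\ga g} + 2c_FC\ga^2c_g^2\int|m_Vu\phi|^2e^{-2\ga g}.
\]
Choose $\ga_0$ with $2c_FC\ga_0^2c_g^2 = 1$ (taking $|\ga| < \ga_0$, or $\ga\in(0,\ga_0)$). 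For $\ga<\ga_0$ the coefficient is less than $1$. Since the left side is finite ($\phi$ has compact support, $m_V$ is locally bounded, and $u\in L^2_{\loc}$), it can be absorbed, giving $c_L = 2c_FC/(1 - \ga^2/\ga_0^2)$.

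The main obstacle is the nonsymmetric cross term. Note that its constant need not be sharp: unlike Lemma \ref{MainUpperBoundLem0}, we only need some $\ga_0>0$, so crude Young bounds suffice, with care that the absorbed gradient term uses the $\frac\la2$ appearing in Proposition \ref{FPml}. The density and justification details for applying Proposition \ref{FPml} to $h$ are routine.
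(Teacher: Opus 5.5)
Your proposal is correct and follows the paper's proof essentially step for step. It uses the same $f=\phi e^{-\ga g}$, $\vp = uf^2$, $h = uf$, and handles the nonsymmetric term $uf\pr{A^T-A}\gr u\cdot\gr f$ the same way, via $f\gr u = \gr h - u\gr f$ and Young's inequality to absorb $\frac{\la}{2}\int\abs{\gr h}^2$; your constant $3\La + 2\La^2/\la$ is exactly the paper's $c_0^2$. It then applies Proposition \ref{FPml} to $h$ and absorbs the $\ga^2c_g^2m_V^2$ term, as the paper does.

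The only differences are cosmetic: using $(a+b)^2\le 2a^2+2b^2$ instead of the $\mu$-weighted Young inequality just shrinks $\ga_0$ by a factor $\sqrt2$. In the density step, cite $V\in L^p_{\loc}$ with $p>\frac n2$ together with Sobolev embedding, rather than merely $V\in L^1_{\loc}$, to get $\int V h_k^2\to\int V h^2$.
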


\begin{rem}
Following the discussion in Remark \ref{constantVRem}, we see that Lemma \ref{MainUpperBoundLem} generalizes Lemma \ref{MainUpperBoundLem0}.
\end{rem}

\begin{proof}
For some $\ga < \ga_0$ and $\phi \in C_c ^\infty (\R^n)$, let $f = \phi e^{-\ga g}$.
Set $\vp = u f^2$ and note that $\vp \in H^1_0(U)$, where $U := \supp \phi$.
Define $h = u f$ and note that $h \in H^1_0(U)$ as well.
Since $h^2 = u \vp$ and
\begin{align*}
A \gr h \cdot \gr h
&= A \pr{f \gr u + u \gr f} \cdot \pr{f \gr u + u \gr f} 
= f^2 A \gr u \cdot \gr u + u^2 A \gr f \cdot \gr f + u f \pr{A \gr u \cdot \gr f + A \gr f \cdot \gr u}  \\
A \gr u \cdot \gr \vp
&= A \gr u \cdot \gr\pr{u f^2}
= A \gr u \cdot \pr{f^2\gr u + 2 u f \gr f}
= f^2 A \gr u \cdot \gr u + 2 u f A \gr u \cdot \gr f,
\end{align*}
then
\begin{align*}
A \gr h \cdot \gr h
&= A \gr u \cdot \gr \vp
+ \abs{u}^2 A \gr f \cdot \gr f 
+ u f \pr{A^T - A} \gr u \cdot \gr f.
\end{align*}
Using ellipticity \eqref{ellip} and that $V \ge 0$, followed by the previous expression for $A \gr h \cdot \gr h$, we see that
\begin{equation*}
\begin{aligned}
\int \la \abs{\gr h}^2 + V \abs{h}^2
&\le \int A \gr h \cdot \gr h + V \abs{h}^2 \\
&= \int A \gr u \cdot \gr \vp +  V u \vp
+ \int \abs{u}^2 A \gr f \cdot \gr f + u f \pr{A^T - A} \gr u \cdot \gr f \\
&= \int \abs{u}^2 A \gr f \cdot \gr f + u f \pr{A^T - A} \gr u \cdot \gr f , 
\end{aligned}
\end{equation*}
where the last line uses \eqref{weakSol}. 
With the upper bound in \eqref{Abound}, we get that
\begin{equation}
\label{mainLemmaStep}
\begin{aligned}
\int \la \abs{\gr h}^2 + V \abs{h}^2
&\le \La \int \abs{u}^2 \abs{\gr f}^2 
+ 2 \La \int \abs{u} \abs{f} \abs{\gr u} \abs{\gr f}.
\end{aligned}
\end{equation}
Since
\begin{align*}
\abs{f}^2 \abs{\gr u}^2
&= \abs{f \gr u + u \gr f - u \gr f}^2
= \abs{\gr h - u \gr f}^2
\le \pr{1 + \frac{\la}{2\La} } \abs{\gr h}^2 + \pr{1 + \frac{2\La}{\la}} \abs{u}^2 \abs{\gr f}^2,
\end{align*}
then
\begin{align*}
2 \abs{u} \abs{f} \abs{\gr u} \abs{\gr f}
&\le \pr{\frac{\la + 2 \La}{\la}} \abs{u}^2 \abs{\gr f}^2
+ \pr{\frac{\la}{\la + 2 \La}}  \abs{f}^2 \abs{\gr u}^2  
\le \pr{\frac{2 \La}{\la} + 2} \abs{u}^2 \abs{\gr f}^2
+ \frac{\la}{2 \La}  \abs{\gr h}^2 .
\end{align*}
With $c_0 := \sqrt{\La  \pr{2\frac {\La}{\la} + 3}}$, the integral inequality in \eqref{mainLemmaStep} then simplifies to
\begin{align*}
\int \frac \la 2 \abs{\gr h}^2 + V \abs{h}^2
&\le c_0^2 \int \abs{u}^2 \abs{\gr f}^2.
\end{align*}

Because $\gr f = \pr{\gr \phi - \ga \gr g \phi} e^{- \ga g}$, then for any $\mu > 0$, Young's inequality shows that 
$$\abs{\gr f}^2 \le \brac{\pr{1 + \mu^{-1}}\abs{\gr \phi}^2 + \pr{1 + \mu} \ga^2 c_g^2 m_V^2 \abs{\phi}^2} e^{-2 \ga g},$$ 
where we have used the bound on $\gr g$.
Plugging this bound into the previous expression shows that
\begin{align*}
\int \frac \la 2 \abs{\gr h}^2 + V \abs{h}^2
&\le \pr{1 + \mu^{-1}} c_0^2 \int u^2 \abs{\gr \phi}^2 e^{-2 \ga g}
+ \pr{1 + \mu} \pr{c_0 \ga c_g}^2 \int \abs{m_V h}^2.
\end{align*}
Since $C^1_0(\R^n)$ is dense in $H^1(\R^n)$, then a limiting argument shows that we can apply Lemma \ref{FPml} to $h \in H^1_0(U)$ to get
\begin{align*}
\int \abs{m_V h}^2
&\le c_F \pr{\int \frac \la 2 \abs{\gr h}^2 + V h^2}.
\end{align*}
Combining the previous two inequalities shows that
\begin{align*}
\int \abs{m_V h}^2
&\le  \pr{1 + \mu^{-1}} c_F c_0^2  \int u^2 \abs{\gr \phi}^2 e^{-2 \ga g} 
+ \pr{1 + \mu} c_F \pr{c_0 c_g}^2 \ga^2 \int \abs{m_V h}^2.
\end{align*}

If $\ga  <  \ga_0 := \frac{1}{c_0 c_g\sqrt{c_F} }$, choose $\mu =  \frac{\ga_0^2 - \ga^2}{2\ga^2} > 0$ so that 
$$\pr{1 + \mu} c_F \pr{c_0 c_g}^2 \ga^2 = c_F \pr{c_0 c_g}^2 \frac{\ga_0^2 + \ga^2}2 = \frac 1 2\brac{1 + c_F \pr{c_0 c_g \ga}^2}.$$
After simplifications, we get that
\begin{align*}
\int \abs{m_V u \phi}^2  e^{-2 \ga g}
= \int \abs{m_V h}^2
&\le c_F \brac{\frac{2 c_0}{1 - c_F \pr{c_0 c_g \ga}^2}}^2   \int u^2 \abs{\gr \phi}^2 e^{-2 \ga g},
\end{align*}
as required.
\end{proof}

\section{Proofs of the Main Results}
\label{MainResultProofs}

In this section, we prove Theorem \ref{MainThm}, Corollary \ref{LiouvilleCor}, and Corollary \ref{OGICor}.
For Theorem \ref{MainThm}, the idea is to proceed by contradiction and apply Lemma \ref{MainUpperBoundLem} to our solution function with an appropriate choice of cutoff function.
We choose cutoff functions with gradients that are supported in the sets over which \eqref{uIntBound} holds.
By taking a limit, we reach our desired contradiction.

We prove Corollary \ref{LiouvilleCor} and Corollary \ref{OGICor} by appealing to the properties of $d_V$ and $m_V$ outlined in Section \ref{RHetc}.

\begin{proof}[Proof of Theorem \ref{MainThm}]

Assume for the sake of contradiction that $u \not\equiv 0$ on $\R^n$.
In this case, there exists $R_0 \ge m_V(0)^{-1}$ so that with $B_{R_0} := B_{R_0}(0)$, 
\begin{equation}
\label{nontrivial}
\norm{u}_{L^2(B_{R_0})} = c_0 > 0.
\end{equation}

Let $\ga_0 = \ga_0(n, \la, \La, p, C_V, c_8)$ be from Lemma \ref{MainUpperBoundLem}, where $c_8$ is from Lemma \ref{RegLem0}.
Let $\set{R_j}_{j = 1}^\iny \su \R_+$ be the unbounded, increasing sequence for which \eqref{uIntBound} holds with some $\ga < \ga_0$.
Without loss of generality, $R_j \ge R_0$ for each $j \in \N$.

For each $j \in \N$, let $\phi_j \in C_c ^\infty(B_{R_j+1}(0))$ be a cutoff function with $\phi_j \equiv 1$ in the ball $B_{R_j} := B_{R_j}(0)$ and $|\nabla \phi_j | \leq 2$ in the annulus $A(R_j, R_j + 1) := A(0, R_{j}, R_{j} + 1)$.

With $\ga$ from \eqref{uIntBound}, $\phi = \phi_j$, and $g = g_V\pr{\cdot, 0}$, where $g_V \in C^\iny(\R^n)$ is from Lemma \ref{RegLem0}, we apply Lemma \ref{MainUpperBoundLem} to get
\begin{equation}
\label{lemmaApp}
\begin{aligned}
\int_{B_{R_j}} \abs{m_V u}^2 e^{-2 \ga g_V\pr{\cdot, 0}}
&\le \int_{\R^n} \abs{m_V u \phi_j}^2 e^{-2\ga g_V\pr{\cdot, 0}} 
\le c_L \int_{\R^n} | u|^2  |\nabla \phi_j |^2  e^{-2\ga g_V\pr{\cdot, 0}} \\
&\le 4 c_L \int_{A(R_j, R_j + 1)} |u| ^2 e^{-2\ga g_V\pr{\cdot, 0}}
\le 4 c_L e^{2 \ga c_1} \int_{A(R_j, R_j + 1)} |u| ^2 e^{-2\ga d_{V}\pr{\cdot, 0}},
\end{aligned}
\end{equation}
where the last bound follows from an application of the first bound in Lemma \ref{RegLem0}.

To bound the integral over $B_{R_j}$, we use that $B_{R_j} \supset B_{R_0}$.
For each $x \in B_{R_0}$, Lemma \ref{mBounds}(c) shows that 
$$m_V(x) \ge \frac{c_3 m_V(0)}{\pr{1 + \abs{x} m_V(0)}^{k_0 /\pr{k_0 + 1}}} \ge \frac{c_3 m_V(0)}{\pr{1 + R_0 \, m_V(0)}^{k_0 /\pr{k_0 + 1}}},$$ 
while Lemma \ref{ulDistance}(a)(b) shows that 
$$d_V(x, 0) \le c_1 + c_4 \pr{ R_0 \, m_V(0)}^{k_0 + 1}.$$
In combination with Lemma \ref{RegLem0}, we see that for each $x \in B_{R_0}$,
$$g_V(x, 0) \le 2 c_1 + c_4 \pr{R_0 \, m_V(0)}^{k_0 + 1}.$$ 
Therefore, 
\begin{equation}
\label{lowerBound}
\begin{aligned}
\int_{B_{R_j}} \abs{m_V u}^2 e^{-2\ga g_V\pr{\cdot, 0}}
&\ge \int_{B_{R_0}} \abs{m_V u}^2 e^{-2\ga g_V\pr{\cdot, 0}} \\
&\ge \brac{\frac{c_3 m_V(0)}{\pr{1 + R_0 \, m_V(0)}^{k_0 /\pr{k_0 + 1}}} e^{-\ga\pr{2 c_1 + c_4 \pr{R_0 \, m_V(0)}^{k_0 + 1}}} }^2 \int_{B_{R_0}}  |u(x)|^2 dx \\
&= \brac{\frac{c_0 c_3 m_V(0)}{\pr{1 + R_0 \, m_V(0)}^{k_0 /\pr{k_0 + 1}}} e^{-\ga\pr{2 c_1 + c_4 \pr{R_0 \, m_V(0)}^{k_0 + 1}}} }^2,
\end{aligned}
\end{equation}
where we have used the assumption in \eqref{nontrivial}.

Substituting \eqref{lowerBound} into \eqref{lemmaApp} and simplifying shows that for each $j \in \N$,
\begin{equation*}
\begin{aligned}
\brac{\frac{c_0 c_3 m_V(0) }{2 \sqrt{c_L} \pr{1 + R_0 \, m_V(0)}^{k_0 /\pr{k_0 + 1}}} e^{-\ga\pr{3 c_1 + c_4 \pr{R_0 \, m_V(0)}^{k_0 + 1}}}}^2
&\le \int_{A(R_j, R_j + 1)} |u(x)| ^2 e^{-2\ga d_{V}\pr{x, 0}} dx.
\end{aligned}
\end{equation*}
By taking $j \to \iny$ and using \eqref{uIntBound}, we conclude that the lefthand side is zero.
However, this is only possible if $c_0 = 0$, which gives our desired contradiction.
\end{proof}

Next we show how the Liouville-type statement in Corollary \ref{LiouvilleCor} follows from Theorem \ref{MainThm}.

\begin{proof}[Proof of Corollary \ref{LiouvilleCor}]
It suffices to show that \eqref{uPwBound} implies \eqref{uIntBound} for some sequence $\set{R_j}_{j=1}^\iny$ and some $\ga < \ga_0$.
Indeed, let $R_j= j$ for each $j \in \N$ and set $\ga = \frac 1 2 \pr{\ga_1 + \ga_0}$.
An application of \eqref{uPwBound} shows that
\begin{align*}
\int_{A(j, j+1)} \abs{u(x)}^2 e^{- 2 \ga d_V(x, 0)} dx
&\le C_{\ga_1}^2 \int_{A(j, j+1)} e^{- \pr{\ga_0 - \ga_1} d_V(x, 0)} dx.
\end{align*}
If $j \ge m_V(0)^{-1}$, then Lemma \ref{ulDistance}(c) shows that for each $x \in A(j, j+1)$,
\begin{align*}
d_V(x, 0) 
&\ge c_5 \pr{\abs{x } m_V(0) }^{1 / \pr{k_0 + 1}} 
\ge c_5 \pr{j \, m_V(0) }^{1 /\pr{k_0 + 1}}.
\end{align*}
Since $\abs{A(j, j+1)} \le c_n j^{n-1}$, then
\begin{align*}
\lim_{j \to \iny} \int_{A(j, j+1)} \abs{u(x)}^2 e^{- 2 \ga d_V(x, 0)} dx
&\le \lim_{j \to \iny} \frac{C_{\ga_1}^2 c_n j^{n-1}}{\exp\set{\pr{\ga_0 - \ga_1}c_5 \pr{j \, m_V(0) }^{1 /\pr{k_0 + 1}} }} 
= 0,
\end{align*}
which establishes \eqref{uIntBound}.
\end{proof}

Finally, the contrapositive of Theorem \ref{MainThm} in combination with technical properties of the distance function (and its related quantities) leads to Corollary \ref{OGICor}.

\begin{proof}[Proof of Corollary \ref{OGICor}]
If $u \not\equiv 0$, then Theorem \ref{MainThm} shows that for every $\ga < \ga_0$, 
\begin{equation*}
\lim_{j \to \iny} \int_{A\pr{j , j + 1}} \abs{u(x)}^2 e^{-2 \ga d_V(x, 0)} dx \ne 0.
\end{equation*}
Fix $\ga_1 < \ga_0$ and take $\ga = \frac{\ga_1 + \ga_0}{2} < \ga_0$.
There exists a constant $c_0 > 0$ and an increasing, unbounded sequence $\set{R_j}_{j = 1}^\iny \su \N$ with the property that for every $j \in \N$,
\begin{equation}
\label{contraCons}
\int_{A\pr{R_j, R_j + 1}} \abs{u(x)}^2 e^{-2 \ga d_V(x, 0)} \ge c_0.
\end{equation}
 
Fix $j \in \N$ for which $R_j \ge \max\set{m_V(0)^{-1} + 1, \tfrac 1 2 c_2^{-\frac 1{k_0}} m_V(0)^{-1 - \frac 1{k_0}}, c_0^{-1} \pr{3 c_2 2^{k_0}}^n e^{\pr{\ga_0 + \ga_1}c_1} }$, where $c_1$, $c_2$, and $k_0$ are from Lemma \ref{mBounds}.
For each $x \in \R^n$, define $r(x) = \min\set{1, m_V(x)^{-1}}$.
Since $\overline{A(R_j, R_j +1)}$ is compact, a Vitali covering argument shows that there exists a finite set of points, $\set{x_i}_{i = 1}^N \su A(R_j, R_j + 1)$, so that $\set{B_{r(x_i)/3}(x_i)}_{i = 1}^N$ is disjoint and $\set{B_{r(x_i)}(x_i)}_{i = 1}^N$ covers $A(R_j, R_j+1)$.
As $\set{B_{r(x_i)}(x_i)}_{i = 1}^N$ covers $A(R_j, R_j+1)$, then from \eqref{contraCons}, we get that
\begin{align*}
c_0
&\le \int_{A\pr{R_j, R_j + 1}} \abs{u(x)}^2 e^{-2 \ga d_V(x, 0)} 
\le \int_{\bigcup B_{r(x_i)}(x_i)} \abs{u(x)}^2 e^{-2 \ga d_V(x, 0)} 
\le \sum_{i=1}^N \int_{B_{r(x_i)}(x_i)} \abs{u(x)}^2 e^{-2 \ga d_V(x, 0)} .
\end{align*}
After relabelling, the pigeonhole principle implies that
\begin{align}
\label{pigeon}
\int_{B_{r(x_1)}(x_1)} \abs{u(x)}^2 e^{-2 \ga d_V(x, 0)} 
\ge \frac{c_0}{N}.
\end{align}
For each $x_i$, Lemma \ref{mBounds}(b) shows that
$$m_V(x_i) \le c_2 \pr{1 + \abs{x_i} m_V(0)}^{k_0} m_V(0) \le c_2 \pr{1 + \pr{R_j + 1} m_V(0)}^{k_0} m_V(0) \le c_2 (2R_j)^{k_0} m_V(0)^{k_0 + 1},$$
where the last inequality uses that $m_V(0)^{-1} \le R_j - 1$.
Since $c_2 (2R_j)^{k_0} m_V(0)^{k_0 + 1} \ge 1$, we deduce that $r(x_i) \ge \brac{c_2 (2R_j)^{k_0} m_V(0)^{k_0 + 1}}^{-1}$.
Because $\set{B_{r(x_i)/3}(x_i)}_{i = 1}^N \su A(R_j - \tfrac 1 3, R_j + \tfrac 4 3)$ is disjoint, we have
\begin{align*}
N \abs{\mathbb{B}_1} \brac{c_2 (2R_j)^{k_0} m_V(0)^{k_0 + 1}}^{-n}
&\le \abs{\mathbb{B}_1}  \sum_{i=1}^N r(x_i)^n
= \sum_{i=1}^N \abs{B_{r(x_i)}(x_i)}
= 3^{n} \sum_{i=1}^N \abs{B_{r(x_i)/3}(x_i)} \\
&= 3^{n}  \abs{\bigsqcup_{i=1}^N B_{r(x_i)/3}(x_i)} 
\le 3^{n}  \abs{A(R_j - \tfrac 1 3, R_j + \tfrac 4 3)}
\le c_n 3^{n}  \abs{\mathbb{B}_1} R_j^{n-1} \\
&\le \frac{c_0 c_n \abs{\mathbb{B}_1}}{\pr{c_2 2^{k_0}}^n e^{\pr{\ga_0 + \ga_1}c_1}}   R_j^{n},
\end{align*}
where we have used the lower bound on $R_j$ in the final inequality.
Simplifying this expression shows that $\disp N \le \frac{c_0 c_n}{e^{\pr{\ga_0 + \ga_1}c_1}} \pr{ R_j  m_V(0)}^{n(k_0 + 1)}$, which we substitute into \eqref{pigeon} to get
\begin{align*}
\int_{B_{r(x_1)}(x_1)} \abs{u(x)}^2 e^{-2 \ga d_V(x, 0)} 
\ge \frac{e^{\pr{\ga_0 + \ga_1}c_1}}{ c_n \pr{ R_j  m_V(0)}^{n(k_0 + 1)}}.
\end{align*}
As $r(x_1) \le m_V(x_1)^{-1}$, then Lemma \ref{ulDistance}(a) shows that $d_V(x, 0) \ge d_V(x_1, 0) - d_V(x, x_1) \ge d_V(x_1, 0) - c_1$ for each $x \in B_{r(x_1)}(x_1)$.
It follows that
\begin{align*}
\int_{B_{r(x_1)}(x_1)} \abs{u(x)}^2 
&\ge \frac{e^{\pr{\ga_0 + \ga_1}c_1}}{ c_n e^{2 \ga c_1} \pr{ R_j  m_V(0)}^{n(k_0 + 1)}} e^{2 \ga d_V(x_1, 0)}
= \frac{\exp\brac{(\ga_0 - \ga_1) d_V(x_1, 0)}}{c_n \pr{ R_j  m_V(0)}^{n(k_0 + 1)}} e^{2 \ga_1 d_V(x_1, 0)},
\end{align*}
where we recall our choice of $\ga$.
An application of Lemma \ref{ulDistance}(c) shows that 
\begin{align*}
d_V(x_1, 0) 
&\ge c_5 \pr{\abs{x_1} m_V(0) }^{1 / \pr{k_0 + 1}} 
\ge c_5 \pr{R_j \, m_V(0) }^{1 /\pr{k_0 + 1}}
\end{align*}
and then, since $r(x_1) \le 1$, we get
\begin{align*}
\int_{B_{1}(x_1)} \abs{u(x)}^2 
&\ge \frac{\exp\brac{(\ga_0 - \ga_1) c_5 \pr{R_j \, m_V(0) }^{1 /\pr{k_0 + 1}}}}{c_n \pr{ R_j  m_V(0)}^{n(k_0 + 1)}} e^{2 \ga_1 d_V(x_1, 0)}.
\end{align*}
Since $\disp f(x) = e^{\mu x} x^{-\al}$ grows without bound in $x$ for any $\mu, \al > 0$, then for each $M > 0$, there exists $j \gg 1$ and $x_j \in A(R_j, R_j+1)$ so that
\begin{align*}
\int_{B_1(x_j)} \abs{u(x)}^2
\ge M e^{2 \ga_1 d_V(x_j, 0)}.
\end{align*}
As this holds for every $\ga_1 < \ga_0$, then the conclusion described by \ref{uballBound} follows.
\end{proof}

\bibliography{refs}
\bibliographystyle{alpha}

\end{document}